\newtheorem{assumption}[theorem]{Assumption}
\newtheorem{remark}[theorem]{Remark}
\newcommand{\kl}[1]{{{#1}}}
\newcommand{\todo}[1]{{#1}}
\definecolor{darkgreen}{rgb}{0,0.7,0}
\definecolor{darkblue}{rgb}{0,0,0.7}
\definecolor{darkred}{rgb}{0.7,0,0}
\newcommand{\G}{N}
\newcommand{\pl}{p}
\newcommand{\ml}{\pi}
\newcommand{\cP}{\mathcal{P}}
\newcommand{\cK}{\mathcal{K}}
\newcommand{\cC}{\mathcal{C}}
\newcommand{\R}{\mathbb{R}}
\newcommand{\bbR}{\mathbb{R}}
\newcommand{\hc}{\widehat{C}}
\newcommand{\hm}{\widehat{m}}
\newcommand{\bbE}{\mathbb E}
\newcommand{\cG}{\mathcal{G}}
\newcommand{\cO}{\mathcal{O}}
\newcommand{\quark}{{\setbox0\hbox{$x$}\hbox to\wd0{\hss$\cdot$\hss}}}
\newcommand{\E}{\mathbb{E}}
\renewcommand{\arraystretch}{2}
\title{{Deterministic %Methods for Filtering: %, part I: 
Mean-field Ensemble Kalman Filtering}}
\author{Kody J.H. Law\thanks{{\tt SRI-UQ Center, CEMSE, KAUST, Thuwal, KSA.} 
CURRENT ADDRESS: Computer Science and Mathematics Division, Oak Ridge National Laboratory, Oak Ridge, TN, USA, 37831
({\tt lawkj@ornl.gov}) }
\and 
 Hamidou Tembine\thanks{SRI-UQ Center, CEMSE, KAUST, Thuwal, KSA ({\tt hamidou.tembine@kaust.edu.sa})}
\and 
Raul Tempone\thanks{SRI-UQ Center, CEMSE, KAUST, Thuwal, KSA ({\tt raul.tempone@kaust.edu.sa})}
}
\begin{document}

\maketitle

\begin{abstract}{The proof of convergence of the standard ensemble Kalman filter (EnKF) from Legland etal. 2011 \cite{le2011large}
is extended to non-Gaussian state space models. 
A density-based deterministic approximation of the mean-field limit EnKF (DMFEnKF) is proposed, consisting of a PDE solver and a quadrature rule. 
Given a certain minimal order of convergence $\kappa$ between the two, this extends to the deterministic filter approximation, which is therefore asymptotically superior to standard EnKF for dimension $d<2\kappa$. 
The fidelity of approximation of the true distribution is also established using an extension of total variation metric to random measures. 
This is limited by a Gaussian bias term arising from non-linearity/non-Gaussianity of the model, which arises in both deterministic and standard EnKF. Numerical results support and extend the theory. }

\end{abstract}

\begin{keywords} 
Filtering, Fokker-Planck, EnKF.
\end{keywords}

\pagestyle{myheadings}
\thispagestyle{plain}
\markboth{Deterministic Nonlinear Filters and EnKF}{Deterministic Nonlinear Filters and EnKF}

\section{Introduction}
\label{sec:intro}

The filtering problem, referred to in the geophysical community as data assimilation 
\cite{kal03}, consists of obtaining meaningful information sequentially online about 
a signal evolving in time, %or state-space model, 
given noisy observations of that signal.  
From the Bayesian perspective the solution of the filtering problem is given 
by the posterior distribution of the signal given all the previous observations 
\cite{jaz70, BC09, kaipio2005statistical, Stuart10, ApteJV08}.
{The signal is typically modeled by a Markov process in which the observation at a given time is conditionally independent upon the rest of the observations given the observed state at that time.}
This set-up is then referred to as a hidden Markov model \cite{cappe2005inference}.
In the case of linear Gaussian state-space model, the solution is also Gaussian, and therefore
it can be parametrized by its mean and covariance and is given exactly in closed
form by the Kalman filter \cite{kalman1960new}.
In general, other cases must be treated non-parametrically, for example with computational
algorithms.
The optimal filter is a point estimator given by the expected value of the filtering
distribution \cite{jaz70, BC09}.
This can be estimated consistently using particle filtering algorithms 
\cite{BC09, doucet2000sequential, DdFG01}.
Indeed the particle filter consistently approximates 
any quantity of interest, i.e. any conditional expectation. 
Despite being asymptotically consistent in the large particle limit, 
it is well-known that the accuracy of particle filters is hindered by a 
constant that grows exponentially with dimension \cite{BLB08,rebeschini2013can}.
Furthermore, naive bounds indicate the constant may also grow
with time, however if 
the hidden process has a Dobrushin ergodic coefficient \cite{dobrushin1956central}
then a uniform-in-time estimate can be obtained 
\cite{del2001stability}. 
In the geophysical community, the dimension of the state-space is typically
enormous, and so practitioners have resorted to sub-optimal filters
such as the ensemble Kalman filter (EnKF) \cite{evensen1994sequential} and its incarnations.
It has been shown by \cite{le2011large, kwiatkowski2015convergence, mandel2011convergence} 
that some versions of EnKF for models of a particular class converge to a 
{\it mean-field limit}, which is defined herein as a process in which the 
current state depends only on the previous state and 
\todo{{\it the statistics of the process}.} 
Such filters may perform well in high dimensions for small ensembles, 
but are biased in the sense that the mean-field limiting distribution is {\it not} 
the filtering distribution and so
estimators of quantities of interest do not converge to the correct value
in the large ensemble limit.  
As both of these methods depend on random ensembles of particles, 
{the asymptotic approximation error of expectations with respect to 
their respective limits is given by $\cO(N^{-1/2})$ for an ensemble size $N$.}
Interestingly, it has been observed that the signal-tracking error of EnKF may not 
decrease for ensembles larger than 50-100 for the quasi-geostrophic equations 
in \cite{evensen1994sequential}, indicating the
error may become dominated by the bias arising from the linearity assumption.

{The EnKF is a filtering algorithm which was introduced in \cite{evensen1994sequential} 
and later corrected in \cite{burgers1998analysis}.
Herein the filter is developed from the perspective as a 
Monte-Carlo approximation of the 
minimum mean-square error linear estimator 
(Thm. 1, p. 87 and Thm. 3, p. 92 of \cite{luenberger1968optimization}).
\footnote{From the perspective in which the unknown is considered deterministic, 
the best linear estimator is given by the similar 
Gauss-Markov Theorem (Thm. 1, p. 86 \cite{luenberger1968optimization}).}
For a single step in the mean-field limit (ensemble size $N\rightarrow \infty$), 
EnKF returns a random variable 
whose expected value 
has the minimum mean-square error over all 
estimators of the forecasted signal which are linear in the new observation.}
Furthermore, this random variable has as its covariance the error covariance
between the minimum mean-square error linear estimator and the truth.
It can thus be viewed as a best linear approximation of the target for a single step.
It is of course not the unique random variable with this mean and covariance, 
even among those which are linear in the observation.
For linear Gaussian models however, this estimator happens to be the mean of the posterior 
distribution, which is given by a single step of the Kalman filter \cite{kalman1960new}.
Furthermore, in this case the error covariance of the estimator is 
the covariance of the posterior distribution. 
Hence the corresponding random variable is distributed according to the filtering 
distribution, and this holds for all time.
{The mean of the posterior filtering distribution is the minimum mean-square error estimator 
over all square integrable functions of the observation, and is hence known as the optimal filter 
\cite{jaz70}.}
For nonlinear and/or non-Gaussian models 
linear estimators based on the procedure outlined above
{\it do not} yield the mean of the updated posterior filtering distribution, 
which is in general nonlinear in the observations. 
Therefore, in this case linear estimators correspond to sub-optimal
filters.
The theory above was translated into finite-resolution approximations of
sub-optimal filters 
using ensemble approximations in the EnKF algorithm 
\cite{burgers1998analysis},
and 
using random-variable-based deterministic approximations in the algorithms of  
\cite{pajonk2012deterministic, li2009generalized, ernst2014bayesian} and references therein.
The papers \cite{augustin2008polynomial, branicki2012fundamental}
caution against the use of random-variable expansions for the 
forward propagation of uncertainty.
While the mean-field EnKF yields a non-Gaussian approximation, 
the mean and covariance 
{are identical to those obtained by making a Gaussian approximation of the
forecast distribution and updating that Gaussian with the Gaussian observations,
as done for example in  \cite{solonen2012variational}.}
Such procedure would therefore also 
yield a best linear approximation for a single step as defined above. 
The results herein will illustrate that the EnKF estimator may 
actually perform better than such Gaussian approximation 
in recovering the mean and covariance in the long-run.

Great effort has been invested in approximating the filtering distribution
using particle and ensemble methods 
\cite{doucet2000sequential, vanden2012data, chorin2010implicit, santitissadeekorn2014two, reich2012gaussian, hoteit2011particle, ades2013exploration}, 
while decidedly less attention has been invested in deterministic approximations \cite{eyink2004mean, mandel2009ensemble, salman2008hybrid, el2012bayesian, el2012bayesian2, pajonk2012deterministic, li2009generalized, bao2014hybrid}.
{The idea of numerically approximating the 
evolution of the Fokker-Planck equation and imposing the update by 
multiplication and normalization in the continuous-discrete 
setting has been %is not new and was 
done %already 
in the works
\cite{eyink2004mean, mandel2009ensemble}.  It was used in those works, as it will be here, 
as a benchmark against which to evaluate other algorithms developed.} 
%, in one spatial dimension.}
%
%However, 
\todo{Here referred to as the full Fokker Planck filter (FPF), 
 it} is actually also advocated as a legitimate and competitive method 
 which can be lifted to higher-dimensional
problems using sparse-grid parametrization ideas 
(see, for example, \cite{bungartz2004sparse, bao2014hybrid}).
More sophisticated approximations of the density 
should be capable of handling much
higher dimensions.
Furthermore, it is well-known that very 
high-dimensional models may exhibit nonlinearity/instability/non-Gaussianity only 
on low-dimensional manifolds 
\cite{brett2012accuracy, blomker2012accuracy, CK04, sapsis2013blended, hayden2011discrete, uboldi2005developing, law2012evaluating}.  
Therefore, it is conceivable that the space can be decomposed, and such 
approximation of the true filtering distribution may be used on the unstable space, 
while a simpler filter (e.g. ensemble or extended Kalman or even 3DVAR) may be used on the complement.
In certain cases such approaches may be particularly simple.  For example,
in a particular context of Lagrangian data assimilation it has been proposed to 
use a particle filter on the flow state and the Fokker-Planck equation for the evolution 
of observed passive tracers, 
which are conditionally independent given a particular flow state \cite{salman2008hybrid}.

The aim of this work is two-fold: %on the one hand, 
\todo{first, a deterministic
approach 
\footnote{{What is meant by deterministic here is that no random number generation is required 
in the algorithm, and this is afforded by working only at the level of densities.}} 
to solving the %true filtering problem and 
mean-field EnKF (MFEnKF) via 
accurate numerical approximation of the Fokker-Planck equation is proposed, 
%similarly to the FPF of \cite{eyink2004mean, mandel2009ensemble},
and %on the other hand this 
the analogous approximation of the true filter, FPF \cite{eyink2004mean, mandel2009ensemble}, 
is used as a benchmark against which to evaluate other filters numerically, 
in particular the standard EnKF and deterministic EnKF (DMFEnKF). %deterministic mean-field EnKF.  
\todo{The DMFEnKF is {\it not} proposed here as an alternative to the Full FPF, as the latter
will provably always have lower mean square error (MSE) for sufficiently accurate approximation schemes.  
However, the DMFEnKF {\it may} be more robust to errors, as it replaces the normalization step of 
the Full FPF by a linear change of variables and convolution with a Gaussian density.}
Second,} the convergence rates of the standard EnKF 
%is established following  \cite{le2011large}. %using an induction argument. 
%The numerical approximation of the 
and DMFEnKF to the MFEnKF are derived.  %mean-field EnKF 
It is shown that the latter has a faster asymptotic rate of convergence %to the limit 
%than the standard EnKF 
than the former for $d < 2\kappa$, where $\kappa$ is the minimal rate of convergence of the 
numerical approximation of the Fokker-Planck equation and the quadrature rule for $d=1$.
%Furthermore, this may be greatly improved by using more sophisticated numerical
%techniques.  
In short, the order of convergence of the standard density and quadrature approximations 
extends to the filtering density approximation for finite times.
%, and this may be 
%smaller using the deterministic approach rather than the Monte Carlo approach.
%
It is also proven that the value of this faster convergence is limited by 
the Gaussian bias of the MFEnKF %mean-field limiting EnKF 
when the underlying nonlinearity/non-Gaussianity of the 
forward model becomes significant, for example
from longer integration of a nonlinear SDE between observations.
In this case, the approximation of the true filter may be used as an accurate
and effective algorithm also for $d < 2 \kappa$, 
although this is not emphasized in the present work. 
The theoretical results are complemented by numerical experiments which confirm the theory and 
also illustrate exactly where the Gaussian error is greatest. 
In particular, approximating the forecast %predicting 
distribution by a Gaussian gives a less
accurate approximation than standard mean-field EnKF, while approximating the updated 
distribution by a Gaussian after the full nonlinear update gives a more 
accurate approximation.
The results of this paper complement recent results for the non-divergence, in terms of
tracking the true signal, of continuous-discrete and continuous-continuous 
EnKF for a general class of quadratic-dissipative, noise-free, 
and possibly infinite-dimensional dynamics \cite{KLS13},
which may be viewed as a particular class of nonlinear Gaussian state-space model 
(i.e. the case of degenerate dynamics in which the Fokker-Planck evolution in finite dimensions 
would reduce to Liouville equation for the continuous-discrete case).  
The results also complement recent results on convergence of EnKF to MFEnKF, %mean-field limit 
for both the linear \cite{kwiatkowski2015convergence, mandel2011convergence} and nonlinear \cite{le2011large}
Gaussian state-space model cases, by extending those results to the more general non-Gaussian
case considered here and considering for the first time, to the knowledge of the authors, 
the fidelity with which MFEnKF %that mean-field limit 
approximates the true filtering distribution.

The rest of the paper will be structured as follows.  
In Section \ref{sec:dtf} the filtering problem is
introduced and definitions of various notions of its solution are given and discussed. 
In Section \ref{sec:enkf} the ensemble Kalman filter is defined and related to the 
discussion of Section \ref{sec:dtf}.
In Section \ref{sec:fpf} the Fokker-Planck solution approach is introduced, several
algorithms are defined, and the convergence theorem is presented.
In Section \ref{ssec:theo} the majority of the theoretical results are presented.
In Section \ref{sec:numerics} numerical experiments are done with the algorithms
to confirm and extend the theoretical results.
Finally, Section \ref{sec:conc} gives conclusions and future directions.

\section{Filtering}\label{sec:dtf}

In this section a general filtering problem is set up and the meaning of solution
is defined and discussed.  In particular, the filtering distribution is introduced, which 
has the so-called optimal filter as its mean.  Then suboptimal filters are introduced,
including the set of one-step optimal linear  filters, and some standard sub-optimal
filters for general nonlinear Gaussian state-space models.  The relation between 
the latter is highlighted, and this gives segue to the EnKF which will be the focal
sub-optimal filter of this work.

\subsection{Set-up}\label{ssec:setup}

{Throughout 
for any positive-definite $A \in \bbR^{\ell \times \ell}$, we 
introduce the following notations for weighted Mahalanobis inner-product 
$ \langle \cdot,\cdot \rangle_{A}=\langle A^{-\frac12}\cdot,A^{-\frac12}\cdot \rangle$ 
and the resulting norm $|\cdot|_{A}=|A^{-\frac12}\cdot|.$ }

{Let $\mathcal{K}: \bbR^d \times \sigma(\bbR^d) \to \bbR_+$ 
be a generic Markov kernel,  
where $\sigma(\bbR^d)$ is the sigma algebra of measurable subsets of $\bbR^d$.
This simultaneously gives rise to a linear operator on measures/densities 
and functionals so that for $f:\bbR^d \rightarrow \bbR$, 
$(\cK f)(u) = \int_{\bbR^d} \cK(u,dv)f(v)$, and for $\mu: \sigma(\bbR^d) \rightarrow [0,1]$
with density $\rho:\bbR^d \rightarrow \bbR_+$,  $\mu'$ is a measure with density $\rho'$
such that 
$\mu'(A) = \int_{\bbR^d} \mu(du) \cK(u,A) = \int_{\bbR^d} \rho(u) \cK(u,A) du = 
\int_A \int_{\bbR^d} \rho(u) \cK(u,v) du dv$ for all $A \in \sigma(\bbR^d)$.  
This will be written as \kl{$\mu' = \cK^\top \mu$ or $\rho' = \cK^\top \rho$}.
 This work will only concern distributions which have density 
with respect to Lebesgue measure and, as such, densities will be 
used interchangeably with probability measures. 
In particular, for each $u\in \bbR^d$, the measure $\cK(u,\cdot)$ is simultaneously identified
with its density $\cK(u,v)$.  Consider the Markov chain $u=\{u_j\}_{j\geq0}$
defined by }
\begin{eqnarray}
\label{eq:dtf2}
 u_{j+1} &\sim&\mathcal{K}(u_j, \cdot), \;j = 0,1,2,\dots, \\
u_0&\sim& \rho_0. 
\nonumber
\end{eqnarray}
This Markov Chain returns a sequence of random variables
related by the Markov property, 
i.e. $u_k | u_{j} = u_k | u_j, u_{j-1}, \dots, u_0$ for all
$k>j$.

In many applications, models such as \eqref{eq:dtf2} are
supplemented by observations of the system as it evolves.
{As part of the statistical model under consideration in the present work 
%is one in which 
it is assumed that the data is defined as}%related to the signal as follows}
\begin{equation}\label{eq:dtfo}
y_{j}=Hu_{j}+\eta_{j}, \;j=1,2,\dots, 
\end{equation}
where $H:\R^d\to\R^m$ is linear 
\footnote{{The assumption of linear observation operator is made here only for simplicity and 
is easily extended.  
For example, an auxiliary variable can always be introduced so that the resulting extended
system has a linear observation operator.}}
and $\eta=\{\eta_j\}_{j\geq 1}$ 
is an i.i.d. sequence,
independent of $u_0$ and the noise in $\cK$, 
with $\eta_1\sim\G(0,\Gamma)$.  
{The accumulated data is denoted $Y_k=\{y_j\}_{j=1}^k$. }
The objective of filtering is to determine 
information about the conditional, or {\it filtered} random variable 
$u_j|Y_j$.  Its distribution is referred to as the filtering distribution
and recovering either this distribution, or (ambiguously) even just an estimate of it, 
is referred to as filtering.  We will refer to the former problem as the true 
filtering problem.
Often the dependence on $u_0$ is neglected.  
This is reasonable in the case 
that the true filtering problem
is stable, in the sense of forgetting its initial distribution in the large time limit.  

Notice that models of the form \eqref{eq:dtf2} include 
models of the following form as a special case 
\begin{eqnarray}\label{eq:dtf1}
\label{eq:model}
 u_{j+1}&=&\Psi(u_j)+\xi_j, \;j = 0,1,2,\dots, \\
u_0&\sim& \rho_0, %\G(m_0,C_0),
\nonumber
\end{eqnarray}
where  $\Psi:\R^d\to\R^d$, and $\xi=\{\xi_j\}_{j\geq0}$ 
is an i.i.d. sequence, independent of $u_0$,
with $\xi_0\sim\G(0,\Sigma)$. 
Gaussian state-space 
models of the form \eqref{eq:dtf1} are commonly encountered, in particular in the 
data assimilation community.

\subsection{Filtering distribution}
\label{ssec:true}

The true distribution of $u_j|Y_j$, which is our {\it gold standard}, has a recursive 
structure under the given assumptions.  
{Define the unnormalized joint likelihood density $g(u,y) \propto p(y|u)$, 
for a particular pair $(u,y)$, as 
\begin{eqnarray}
g(u,y) = %\frac{\tilde{g}(u,y)}{\int \tilde{g}(u,y) du}, \quad\quad \tilde{g}(u,y) &=& 
e^{-\frac12 |y-Hu|^2_\Gamma},
\label{eq:g} 
\end{eqnarray}
with shorthand $g_j(u)=g(u,y_j)$.}
Furthermore define $\cC_j$ as the nonlinear operator which 
updates the density according to the $j^{th}$ observation, i.e. 
for $u \sim \hat{\rho}$, $u|y_j \sim \rho$, where
\begin{equation}
\rho = \cC_j \hat{\rho} :=  \frac{ \hat{\rho} g_j} {\int \hat{\rho}g_j}.
\label{eq:update}
\end{equation}
Notice that this operation is treacherous because either large or small values of 
$e^{-\frac12 |y-Hu|^2_\Gamma}$ can cause large error growth.  In particular, $e^{-\frac12 |y-Hu|^2_\Gamma}$
is bounded below only by zero, although it obtains arbitrarily small values arbitrarily rarely, {so the 
probability of the denominator being very small and leading to large amplification of errors is not high.}

Denote the filtering density given $j$ observations by $\rho_j$, and 
then the recursion may be given by
\[\;\;\mbox{Forecast}\;
\begin{array}{l}
  \hat{\rho}_j = \cK^\top \rho_{j-1}, 
 \end{array}                              
\] 
\[\mbox{Update}\;\;\;
\begin{array}{llll}
\rho_j = \cC_j \hat{\rho}_j,
\end{array}
                                     \]  
where $\cK$ is defined in \eqref{eq:dtf2}. %eq:model2}.                                    
Or, in other words $\rho_j = \cC_j \cK^\top \rho_{j-1}$.  In the following section we will derive a deterministic 
solution approach to approximating this recursion.

\subsection{Optimal filtering}\label{ssec:optimal}

{
In the sense of mean-square error,
the optimal point estimator $\hat{u}_j(Y_j)$ 
(as a function of the observations) 
of the signal $u_j$ is $\bbE(u_j|Y_j)$ \cite{jaz70, bobrowski2005functional,
%luenberger1968optimization, BC09, 
pajonk2013sampling, ernst2014bayesian}.  
In other words
\kl{\begin{equation}
\bbE(u_j|Y_j) = {\rm argmin}_{\{\hat{u} = \phi(Y_j); \phi\in L^2\}} \bbE | \hat{u} - u_j |^2,
\label{eq:mmse}
\end{equation}
where the expectation is with respect to $(u_j, Y_j)$, and $L^2$ here denotes
the collection of functions $f$ such that $\bbE_Y ( f^2(Y) ),$ where $\bbE_Y$ denotes
expectation with respect to $Y$.
A short concise proof of this fact may be found in Theorem 5.3 of \cite{jaz70}.
The scrupulous reader may find more satisfaction in the exposition of 
\cite{bobrowski2005functional} Theorem 3.2.6, a consequence of the Doob-Dynkin Lemma 2.1.24.
Note $u_j\in L^2$ under mild assumptions on the kernel $\cK$ and initial distribution \cite{Oks98},
which will indeed be made later on.}
%This 
\todo{The optimal point estimator} is a random variable for $Y_j$ random, and a deterministic variable for 
a given realization of $Y_j$.
Our aim is to solve the true filtering problem, i.e. obtain the full filtering
distribution of $u_j|Y_j$.  It is nonetheless good to know that the optimal 
point estimator may be easily obtained from the true filtering distribution.

Naturally the fidelity with which we approximate $u_j|Y_j$ 
will dictate the fidelity with which we approximate \eqref{eq:mmse}.  
Assume one has access to the random variable $u_j|Y_{j-1}$ in the
complete distributional sense.
Then, upon conditioning the equation \eqref{eq:dtfo}
on $Y_{j-1}$, \kl{one %notices 
finds that $y_j|Y_{j-1} = H u_j|Y_{j-1} + \eta_j$. 
%it is clear that
Therefore \eqref{eq:mmse}
can also be represented in terms of the one-step optimal
point estimator, {following Doob-Dynkin}, where
\begin{equation}
\bbE(u_j|Y_j) =\bbE[ (u_j|Y_{j-1})| (y_j|Y_{j-1}) ] 
= {\rm argmin}_{\{\hat{u} = \phi(y_j|Y_{j-1}); \phi\in L^2%(\bbR^m;\bbR^d)
\}} 
\bbE | \hat{u} - (u_j|Y_{j-1}) |^2.
\label{eq:mmse1}
\end{equation}}
In other words, the minimum mean-square error estimator of the time $j$ state given the 
time $j-1$ filtering distribution and the $j^{th}$ observation is the expectation of the 
time $j$ filtering distribution, as expected.  
This of course requires knowing the full (filtering) distribution of $u_{j-1}|Y_{j-1}$ 
to get the (forecast) distribution of $u_{j}|Y_{j-1}$.  
In fact, one finds that the formula for $\bbE(u_j|Y_j)$ only relies on 
$\bbE(u_j|Y_{j-1})$ in the linear case (c.f. Thm. 3, p. 92 \cite{luenberger1968optimization}).
% so it may not be very useful, but it is nonetheless instructive in devising useful and 
%computationally tractable approximations.

\subsection{Sub-optimal filtering}\label{ssec:sof}

The optimal filter is often very difficult to obtain, particularly in high-dimensional 
problems.  So in practice one may resort to sub-optimal filters.  
We present the one-step optimal linear filter for general models, which only uniquely defines mean
and covariance.
%, and therefore has a natural connection to Gaussians.  

\subsubsection{One-step optimal linear filter}\label{sssec:sof1}

Given the distribution of $u_{j}|Y_{j-1}$, 
obtaining even the one-step optimal point estimator \eqref{eq:mmse1} is often a 
formidable task for high-dimensional models, so one may consider instead the 
{\it one-step optimal linear} point estimator %. %, i.e. 
%In other words, the 
defined as the best estimator $\hat{u}=\phi(y_j|Y_{j-1})$ 
out of the class of linear functions $\phi(y) = K y + b$ (Thm. 1, p. 87 \cite{luenberger1968optimization}).
In the remainder of discussion here all random variables are conditioned on 
$Y_{j-1}$ and the conditional dependence is omitted to avoid notational clutter.
The following optimization problem is solved 
\begin{equation}
m_j(y_j) = {\rm argmin}_{\left \{\hat{u} = \phi(y_j); \phi(y) ~\todo{\rm linear} %\in {\rm span}\{1,y\} 
\right \}} \bbE | \hat{u} - u_j |^2.
\label{eq:mmse1l}
\end{equation}
%where $u_j, y_j$ are conditioned on $Y_{j-1}$.  
\todo{Let $\phi(y) = K_j y + b_j$, and let $y_j=Hu_j+\eta_j$ 
be the random variable defined in \eqref{eq:dtfo}.  Optimizing} 
this equation with respect to $K_j$ and $b_j$ gives \cite{luenberger1968optimization}
\begin{align}
\label{eq:kay}
K_j & = \bbE[(u_j-\bbE u_j) \otimes (y_j - \bbE y_j)] \bbE[ (y_j -\bbE y_j) \otimes (y_j-\bbE y_j) ]^{-1}, \\
\label{eq:mmse1sol}
m_j(y_j) & = \bbE u_j + K_j (y_j - H \bbE u_j).
\end{align}
Notice that 
$\bbE m_j = \bbE u_j$, but indeed it is not necessarily the case that $m_j = \bbE(u_j|y_j)$.  
In fact, this is only the case when $u_j$ is Gaussian.  Note that this holds only while $y_j$
is a random variable.  In practice, for the {\it quenched} case, in which $y_j$ is a deterministic 
{\it realization}, $m_j$ is deterministic as well and is therefore its own expected value.

\todo{Suppose we update the random variable itself, %in Eq. \eqref{eq:mmse1sol}
%rather than its mean as the equation dictates, 
using the single observed {\it realization} 
(non-random) $y_j$, as follows:  
\begin{equation}
v_j = u_j + m_j(y_j) - m_j(H u_j + \tilde{\eta}_j) = u_j + K_j ((y_j-\tilde{\eta}_j) - H u_j), 
%K_j (y_j - H u_j).
\label{eq:mmseenkf}
\end{equation}
%\todo{
where $\tilde{\eta}_j \sim N(0,\Gamma)$.
%Due to linearity and the fact that $\bbE y_j = y_j$ (since it is deterministic), %we have 
One then has that $\bbE v_j = m_j(y_j)$, the
one-step optimal linear estimator for this given $y_j$, %!
%and the covariance of $v_j$ is   
%However, the covariance of $v_j$ is smaller than 
%the error covariance: 
and
%$C_j 
$$\bbE[v_j - m_j(y_j)] \otimes [v_j-m_j(y_j)] = \bbE[u_j - m_j(H u_j + \tilde{\eta_j})]\otimes[u_j-m_j(H u_j + \tilde{\eta_j})],$$ 
where the quantity on the right-hand side is referred to as the error covariance.
In the data assimilation literature, $y_j \pm \tilde{\eta}_j$ is referred to 
as a ``perturbed observation"  \cite{burgers1998analysis}.}
%by an additive term $K_j \Gamma K_j^\top$.  
%This was discovered in the early development of the ensemble Kalman filter \cite{burgers1998analysis} 
%and the solution was to add an independent random variable $\eta_j \sim N(0,\Gamma)$ to $y_j$
%so that the covariance of $v_j$ is given by $C_j$. % in Eq. \eqref{eq:mmse1eco}.  
The material in this section is
also discussed in the recent
works \cite{pajonk2012deterministic, ernst2014bayesian, pajonk2013sampling, pajonk2012stochastic}. }

\section{Ensemble Kalman filter}\label{sec:enkf}

The EnKF in principle can be viewed as an attempt 
to construct a suboptimal filter which is optimal among the class of all filters 
in which the update is given by linear transformation of the observation.  
Such filters follow in principle from the procedure outlined in 
Sec. \ref{sssec:sof1}, at least for a single observation update.  
%As discussed there, such filter is unique only
%as far as its mean being defined by \eqref{eq:mmse1sol} and its covariance
%by \eqref{eq:mmse1eco}, which means there are an infinite number of candidates.  
%Another natural 
One choice might be the one that is completely defined by its mean and 
covariance, i.e. the corresponding Gaussian.  In what follows, we will see that this
is not the optimal one and its error is larger than the EnKF.  
First the mean-field EnKF equations are presented, following the discussion 
of the previous section.  Then the standard finite-ensemble EnKF 
is presented.

\subsection{Mean-field limit}\label{ssec:mfenkf}

{The term {\it mean-field} typically refers to the empirical measure
$\frac1N \sum_{n=1}^N \delta_{v^{(i)}_{j}}$ (the mean of the Dirac point masses)
of a system of interacting particles $\{v^{(i)}_j\}_{n=1}^N$.  Mean-field {\it interactions} refer
to interactions of the mean-field with the individual particles, and the mean-field 
{\it limit} is the measure $\rho_j = \lim_{N\rightarrow \infty} \frac1N \sum_{n=1}^N \delta_{v^{(i)}_{j}}$,
assuming it exists.  The term mean-field limit will also be used to describe
the corresponding limiting system, in which the 
\kl{particles are i.i.d. but an individual depends on the statistics of its distribution.}
%"interactions" depend only on the statistics
%of the individual or, equivalently, the interaction with the infinite particle system.  
Such system is completely defined by a single process, 
which will be referred to as a mean-field {\it process}.}

Beginning with an approximation of the 
forecast random variable $\widehat{v}_j \approx u_j|Y_{j-1}$, 
one may construct a suboptimal filter update 
$v_j \approx u_j|Y_{j}$ using Eq. \eqref{eq:mmseenkf}, 
%by modifying the observations $y_j \rightarrow y_j + \eta_j,$ 
%where $\eta_j \sim N(0,\Gamma)$ is an independent random variable, 
%in order to inflate the covariance of $v_j$ 
\todo{as discussed in the end of Section \ref{sssec:sof1}.} 
%so that the covariance of $v_j$ is given by Eq. \eqref{eq:mmse1eco}.  
This is the mean-field limiting 
interpretation of the so-called perturbed observation EnKF from the literature \cite{burgers1998analysis},
and will be the EnKF algorithm we focus on here.
\kl{Alternative presentations of this material are available in \cite{ernst2014bayesian}, %Sec. 7.3 and Theorem 7.2, 
and %an alternative philosophical perspective is presented in 
the works
\cite{pajonk2012deterministic, pajonk2013sampling, pajonk2012stochastic}. }

The following mean-field process defines the mean-field limiting EnKF (MFEnKF):

\[\;\;\;\;\mbox{Forecast}\;\left\{\begin{array}{lll}
    \widehat{v}_{j+1}&\sim\mathcal{K}(v_j, \cdot), \\%\vspace{4pt}\\
\hm_{j+1}&=\E\widehat{v}_{j+1},\\%\vspace{4pt}\\
\hc_{j+1}&=\E(\widehat{v}_{j+1}-\hm_{j+1})\otimes(\widehat{v}_{j+1}-\hm_{j+1})
 \end{array}\right.
                                   \] 
\[\mbox{Update}\;\;\;\;\left\{\begin{array}{llll} S_{j+1}&=H\hc_{j+1}H^T+\Gamma \\%\vspace{4pt}\\ 
\label{eq:anal}
K_{j+1}&=\hc_{j+1}H^TS_{j+1}^{-1}\\
%\vspace{4pt}\\
{\tilde y}_{j+1}&=y_{j+1}+\tilde{\eta}_{j+1}\\
v_{j+1}&=(I-K_{j+1}H)\widehat{v}_{j+1}+K_{j+1}{\tilde y}_{j+1}.%\vspace{4pt}\\
\end{array}\right.
                                     \] 
Here 
$\tilde{\eta}_j$ are i.i.d. draws 
from 
$\G(0,\Gamma)$  and perturbed observation 
refers to the fact that the update sees an observation perturbed 
by an independent draw from $\G(0,\Gamma)$.

Notice that the one-step optimal linear  filter $m_{j+1}=\E{v}_{j+1}$ and its covariance 
$C_{j+1}=\E({v}_{j+1}-m_{j+1})\otimes({v}_{j+1}-m_{j+1})$
are precisely equal to the mean and covariance of the posterior distribution under the assumption 
that the prior forecast distribution of $\widehat{v}_{j+1}$ is Gaussian with mean given 
by its mean $\hm_{j+1}$ and covariance given by its covariance $\hc_{j+1}$.  
However, under this assumption the posterior is also Gaussian, while the 
MFEnKF distribution is not.

\subsection{Finite ensemble}\label{ssec:enkf}

The standard EnKF in practice consists of propagating an ensemble, using this ensemble to estimate the covariance and mean, 
and then following the procedure described in the previous section.  
The EnKF is executed in a variety of ways and we consider here only one of these, the perturbed observation EnKF.
It is given as a Monte Carlo approximation of the MFEnKF version: 

\[\;\;\;\;\;\;\;\quad\quad\mbox{Forecast}\;\left\{\begin{array}{lll}  
\widehat{v}_{j+1}^{(n)}&\sim\mathcal{K}(v_j^{(n)}, \cdot), \;n=1,...,N\\%\vspace{4pt}\\
\hm_{j+1}&=\frac1N\sum_{n=1}^N\widehat{v}_{j+1}^{(n)},\\%\vspace{4pt}\\
\hc_{j+1}&=\frac1N\sum_{n=1}^N(\widehat{v}_{j+1}^{(n)}-\hm_{j+1})\otimes(\widehat{v}_{j+1}^{(n)}-\hm_{j+1})
 \end{array}\right.
                                   \] 
\[\mbox{Analysis}\;\;\;\;\left\{\begin{array}{ll} 
v_{j+1}^{(n)}&=(I-K_{j+1}H)\widehat{v}_{j+1}^{(n)}+K_{j+1}y_{j+1}^{(n)}%\vspace{4pt}
\\y_{j+1}^{(n)}&=y_{j+1}+\tilde\eta_{j+1}^{(n)}                                    \end{array}\right.
                                     \] 
Here $\tilde\eta_j^{(n)}$ are i.i.d. draws from 
$\G(0,\Gamma)$, and $K_{j+1}$ is defined as in the previous section.

Analysis of the finite-ensemble case is more involved because of the correlation
between ensemble members arising from the sample covariance.  This issue is explored in 
detail in \cite{le2011large} and it is shown that the finite-ensemble EnKF converges asymptotically
to the mean-field version for models of the form \eqref{eq:model}.  
We will show in the subsequent section that this proof may be extended to the case \eqref{eq:dtf2}.

\section{Fokker-Planck filters}
\label{sec:fpf}

We consider here the setting in which \eqref{eq:dtf2} is given by the
solution of an SDE over a fixed interval of time $h$.  We will solve
the filtering problem by
approximating the evolution of the density with the Fokker-Planck
equation between observations, and then using a variety of different 
approximations in the update.  Different updates will yield accurate 
approximations of (i) the true filtering distribution, (ii) the mean-field EnKF distribution 
as well as (iii)/(iv) two different Gaussian approximations.

\subsection{The setup}
\label{ssec:fpfsetup}

We will consider the following general form of stochastic process 
$u : \bbR_+ \rightarrow \bbR^d$:
\begin{equation}
du = F(u) dt + \sqrt{2 b} dW, \quad u(0)=u_0,
\label{eq:langevin}
\end{equation}
where $dW$ is the increment of a Brownian motion $W \in \bbR_+ \times \bbR^d$, 
$F:\bbR^d \rightarrow \bbR^d$ 
is differentiable and Lipschitz.
and $b \in (0,\infty)$ constant.
\footnote{This can easily be extended to general nonnegative symmetric 
operator $b(u):\bbR^d \rightarrow \bbR^{d \times d}$, 
but constant scalar $b>0$ will be sufficient here and will simplify the discussion.}

It is well-known that the pathspace distribution 
of solutions to Eq. \eqref{eq:langevin} over realizations
of $W(t)$ has density $\rho: \bbR^d \times \bbR_+ \rightarrow  \bbR_+$ 
given by the solution of the Fokker-Planck equation
\begin{equation}
\partial_t \rho = \cG \rho, \quad \cG \rho = \nabla \cdot (b \nabla \rho - F \rho),
\quad \rho(u,0)=\delta(u_0-u),
\label{eq:fok1}
\end{equation}
with zero boundary conditions at $\pm \infty$.
In other words, for $\varphi:\bbR^d \rightarrow \bbR$ we have
$$
\bbE_{u(t)}[\varphi(u(t))] = \int_{\bbR^d} \varphi(u) \rho(u,t) du.
$$
The reader is referred to the works \cite{evans, Marko_Vill00} and references
therein for results and estimates regarding the regularity of Fokker-Planck equations.  
Sufficient regularity will be assumed here, and will not be dealt with further.  
The reader is referred also to \cite{Ris84} for a survey of methods of solution and 
applications.

The solution to equation \eqref{eq:fok1} will be approximated by a finite-dimensional 
vector.  
For now, it will be taken as an assumption that we can approximate the 
solution to this equation in $d$ dimensions using \kl{approximately} $N$ degrees of freedom with
accuracy $\cO(N^{-\kappa_1/d})$ -- this may be obtained by merely constructing a
 tensor-product grid \kl{with ceil$(N^{1/d})$ points in each dimension, where ceil$(x)$ 
 denotes the smallest integer greater than or equal to $x$,} and using a 
$1d$ deterministic approximation method of order $\kappa_1$ in each dimension.   
 Time discretization error will be ignored in the present work to avoid clutter, 
 but can be easily included later.  
 
 The numerical approximation of the Fokker-Planck equation between observation
 times will be combined with updates to the corresponding density at observation
 times using a quadrature rule to approximate \eqref{eq:update}.  
 Similarly to above, it will be assumed
 that we have a quadrature rule of order $\kappa_2$.  When combined, this gives a method
 of order $\kappa = {\rm min}\{\kappa_1,\kappa_2\}$ for the filtering density for $d=1$, 
 or $\cO(N^{-\kappa/d})$ for the $d-$dimensional problem.    
 This will be the basis of the following algorithms.

\subsection{The algorithms}
\label{ssec:algorithms}

Several filtering algorithms based on accurate solution of the Fokker-Planck equation
are proposed in this section for comparison with standard EnKF.  
It is important to emphasize the pedagogical benchmark nature of this work, which
is intended to elucidate various aspects of filtering and inspire development of new 
usable algorithms, rather than to propose new usable algorithms itself. 

First, for use as a benchmark, 
we present the deterministic approximation of the 
full Fokker-Planck filter (FPF) which targets 
the true filtering density, i.e. is asymptotically unbiased, 
and hence recovers a consistent approximation of 
the optimal filter as its expectation. 
Next, we present the deterministic approximation of the 
mean-field EnKF %approximation 
in density form using Fokker-Planck evolution of the density (DMFEnKF).
Finally, in order to examine the value of the non-Gaussian 
component retained in the MFEnKF distribution for nonlinear
non-Gaussian models, we 
present two approximate filters which 
{\it impose} Gaussianity on the updated (MFEnKF-G1)
and forecast %predicting 
(MFEnKF-G2) densities.

\subsubsection{Full FPF}\label{sssec:ffpf}

In this section the full Fokker-Planck filter is described,
which 
is a consistent approximation of
the true filtering distribution.
First,  
the deterministic evolution 
of the density is approximated using  
a discrete approximation, e.g. finite differences or similar,  
and then the update Eq. \eqref{eq:update} is approximated 
using a quadrature rule, such as the trapezoidal rule 
or similar.  
In general the algorithm is given by the following
\begin{algorithm}[h!]
\title{Full FPF}
\center{Full FPF}
\label{fpf}
\begin{itemize}
\item(1) Approximate the density at time $j$ over space using some 
accurate and economical spatial discretization. 
\item(2) Evolve forward the Fokker-Planck equation for this density 
using an accurate time-stepper, 
obtaining an estimate of the forecast 
distribution at time $j+1$. 
\item(3) Approximate the updated distribution using some integration rule to 
normalize the prior-weighted likelihood, and return to step (2).
\end{itemize}
\end{algorithm}

\kl{
\subsubsection{Deterministic mean-field EnKF}\label{sssec:mfenkf}

In this section the approximation of 
MFEnKF from Sec. \ref{ssec:mfenkf} is derived in 
density form, so that the algorithm may utilize the Fokker-Planck forecast 
density as in the full Fokker-Planck algorithm of the previous section.
The resulting algorithm is called the deterministic MFEnKF (DMFEnKF).

First, consider a random variable $z=x+U \xi$ where $x \in \bbR^d$ and $\xi \in \bbR^m$
are random variables and $U \in \bbR^{d \times m}$ is a matrix consisting of orthonormal 
columns.  Let $U_\perp$ denote and orthonormal basis for the nullspace of $U^\top$, 
so that $R=[U,U_\perp]^\top$ is a rotation on $\bbR^d$.  
Now, let $\tilde{z}=Rz$ and $\tilde{x} = R x$, so that $\tilde{z} = \tilde{x} + [\xi^\top, {\bf 0}]^\top$, 
where ${\bf 0}$ denotes a $d-m$ dimensional row vector.
Denote the densities of $\xi$, $x$, $z$, $\tilde{x}$, and $\tilde{z}$ by $\rho_{\xi}$, $\rho_{x}$, 
$\rho_{z}$, $\rho_{\tilde{x}}$, and $\rho_{\tilde{z}}$, and notice that 
$\rho_{\tilde{x}}(\cdot) = \rho_{x}(R^\top \cdot)$ and $\rho_{z}(\cdot ) = \rho_{\tilde{z}}(R \cdot)$.  
Now, observe that a simple change of variables with unit Jacobian yields
\begin{equation}
\rho_{\tilde{z}}(\tilde{z}) = \int_{\xi \in \bbR^m} \rho_{\tilde{x}}(\tilde{z} - [\xi^\top,{\bf 0}]^\top) \rho_{\xi}(\xi) d\xi.
\label{eq:sumconv}
\end{equation}

The update formula $v_{j-1} \mapsto v_j$ of subsection \ref{ssec:mfenkf} culminates in the 
addition of two independent random variables 
$v_{j} = (I - K_j H) \widehat{v}_j  +  K_j \tilde{y}_j$, where 
$\tilde{y}_j \sim N(y_{j}, \Gamma)$.
Define (for each $j$) the singular value decomposition
of the Kalman gain \eqref{eq:kay} $K_j = U_j \Sigma_j V_j^\top$ where $\Sigma_j \in \bbR^{m \times m}$
is diagonal and positive definite, and $U_j \in \bbR^{d\times m}$ and 
$V_j \in \bbR^{m\times m}$ have orthonormal columns. 
%(recall $d$ is the state dimension and $m$ is the observation dimension).
Defining $z=v_j$, $x=(I - K_j H) \widehat{v}_j$, $\xi = \Sigma_j V_j^\top \tilde{y}_j$,
and $U=U_j$, this is exactly the setting of the previous paragraph.  
The density of $v_j$ is therefore defined as in \eqref{eq:sumconv}. 
 %Let $R_j$ denote the corresponding rotation.
%It is obvious that 
From its definition, %it is obvious that 
the density of $\xi$ is given by
\begin{eqnarray}
\label{eq:dmfenkfg}
\tilde{g}_j(\xi) &=& \exp\left \{ -\frac12 \left | \xi - \Sigma_j V_j^\top y_j \right |_{\Sigma_j V_j^\top \Gamma V_j \Sigma_j}^2 \right \} \quad 
\forall \xi \in \bbR^m,\\
\hat{g}_j(\xi) &=& \frac{\tilde{g}_j(\xi,y_j)}{\int_{\xi \in \bbR^m} \tilde{g}_j(\xi,y_j)},
\label{eq:gk}
\end{eqnarray}
while the density of $x$ arises by a simple change of variables formula.
Finally, define $\cC^G_j$ by its action on a density $\pl$ as follows
\begin{equation}
\begin{array}{llllll}
\label{eq:gupdate1}
\rho_x(x) & =  %\rho_{\tilde{x}}
p((I-K_jH)^{-1}x) {\rm det}[I-K_jH]^{-1} \\ %(=p_{X_1}(u)) \\
(\cC^G_j p)(u) &:= \int_{\xi\in\bbR^m} %\hat{\pl}
\rho_x(u-U_j \xi) \hat{g}_j(\xi) d\xi. 
\end{array}
\end{equation}
This follows %exactly from the above discussion in a precise and straightforward manner, 
precisely from above, observing that $U \xi = R^\top[\xi^\top,{\bf 0}]^\top$ 
and changing variables of \eqref{eq:sumconv} again.

The MFEnKF is therefore given in density form by $\ml_{j} = \cC_{j}^G \cK^\top \ml_{j-1}$,
or its discrete approximation.  In general the DMFEnKF algorithm is given by the 
following
\begin{algorithm}[h!]
\label{mfenkf}
\title{DMFEnKF}
\center{DMFEnKF}
\begin{itemize}
\item(1), (2) Same as in Algorithm Full FPF. %above.
\item(3) Approximate the mean and covariance of the 
{forecast} distribution using an integration rule.
\item(4) Perform the linear change of variables of the predicting density
to the updated variables $u \rightarrow (I-K_jH)u$ using interpolation and 
either rescale by the determinant det$[I-K_jH]$ (or renormalize using an integration rule). 
\item(5) Incorporate the observation via convolution with the density of 
$K_j N(y_j,\Gamma)$ on the range-space of $K_j$, as described above \eqref{eq:gupdate1}.
\end{itemize}
\end{algorithm}

%Some comments are in order.  First, i
Note that in order to \kl{evaluate} $\rho_x(u_i)$ in Eq. \eqref{eq:gupdate1} 
for a point $u_i$ on the grid, it is necessary to find the value 
$p((I-K_jH)^{-1}u_i)$.
 Indeed the transformation is contractive on the 
subspace $UU^\top\bbR^d$ since $|U^\top(I-K_jH)| \leq 1$, 
so $(I-K_jH)^{-1}u_i$ will sometimes lie outside the \kl{original numerical 
domain used for simulation}
\footnote{Here a single fixed grid is assumed, however in practice it may be beneficial,
and even necessary, to allow the grid to adapt with time.}.
In this case one must have $\pl((I-K_jH)^{-1}u_i)\approx 0$, and the 
value is simply set to zero.  
Of course, if the approximation is going smoothly then the density
is approximately zero there, but care must be taken.  
Aside from requiring the value of the original density 
off the grid, the density also needs to be over-resolved in general so
that when it is contracted, and the mesh is effectively coarsened, the resolution is 
still reasonably good.  
%Second, convolution is an expensive operation that should be 
%avoided if possible.  
%It can be avoided by using a square-root filter version, but this direction is not 
%pursued further here.
}

\subsubsection{Mean-field EnKF, with Gaussian approximation}\label{sssec:mfenkfg}

It will be instructive when executing the numerical experiments in Sec. \ref{sec:numerics} 
to also compare the filters above and the EnKF with filters which actually make an {\it explicit} 
Gaussian approximation.
We therefore consider two cases.  
In the first case, we proceed as in the full FPF, but {\it after the update} we retain only the 
best Gaussian approximation, i.e. the Gaussian having mean and covariance given by the 
full update.  In the second case, we approximate the {\it forecast} 
distribution by a Gaussian,
and use the update from Sec. \ref{ssec:mfenkf}.  These algorithms are given as follows
\begin{algorithm}[h!]
\label{eq:mfg1}
\title{MFEnKF-G1}
\center{MFEnKF-G1}
\begin{itemize}
\item(1),(2),(3) Same as in Algorithm Full FPF.
\item(4) Approximate the mean and covariance of the {\it updated} distribution using an integration rule.
\item(5) Approximate the updated density by the Gaussian with the mean and covariance from step (4).
\end{itemize}
\end{algorithm}
\begin{algorithm}[h!]
\label{eq:mfg2}
\title{MFEnKF-G2}
\center{MFEnKF-G2}
\begin{itemize}
\item(1),(2) Same as in Algorithm Full FPF.
\item(3) Approximate the mean and covariance of the {\it forecast} 
distribution using an integration rule, and approximate
the forecast density by a Gaussian with this mean and covariance.
\item(4) Update this Gaussian with observations using the closed form Kalman update formula for the mean and covariance.
\item(5) Construct the updated Gaussian density using the mean and covariance from step (4) and return to step (2).
\end{itemize}
\end{algorithm}

\section{Theoretical results}
\label{ssec:theo}

In this section the densities delivered by the 
algorithms in the previous section are theoretically probed.  
It is proven that if the nonlinearity/non-Gaussianity
is ``small enough", then the sampling error dominates and the Fokker-Planck algorithms 
all asymptotically outperform the standard EnKF for a given cost, up to a critical dimension.  
In particular, in this regime the MFEnKF is superior to EnKF for small enough dimension.  
However, for a significant nonlinearity/non-Gaussianity the 
sampling error is obscured by the non-Gaussian error 
and the EnKF may then perform comparably to the MFEnKF even with a small sample size.

It will be convenient to introduce a distance measure between densities,
%following 
inspired by the one defined on p. 6 of \cite{rebeschini2013can}, to make statements about convergence of
the filters.
\kl{Let $\mu:\bbR^d \rightarrow \bbR$ denote a random measure, such as
$\frac1N \sum_{n=1}^N \delta_{x^{(i)}}$, and define the following norm
$$
\|\mu\| = {\rm sup}_{|f|_L \leq 1} \sqrt{ \bbE \left | \int f d\mu\right |^2 },
$$ 
where $f:\bbR^d \rightarrow \bbR$ and 
$|f|_L = |f|_\infty + {\rm sup}_{x \neq x'} \frac{|f(x)-f(x')|}{|x-x'|}$ is the bounded
Lipschitz norm.  The following notation is used 
$\int f d\mu = \int_{\bbR^d} f(u) \mu(du) = \int_{\bbR^d} f(u) \pi(u) du$,  %is used 
when $\mu$ has a density $\pi$.} %, with slight abuse of identical notation for measure and density.}
In this case, we define $\|\pi\| := \|\mu\|$. 
Now define the metric $d(\cdot,\cdot)$
between two random measures with densities
$\ml$ and $\pl$ as follows:
\begin{equation}
d(\ml,\pl) = \|\ml - \pl\|.
\label{eq:metric}
\end{equation}
This metric is relevant for probability densities 
as it measures, in a mean-square sense, the distance between observables, 
i.e. expectations with respect to the given random measures. 
{For example, one may be interested in the mean-square error (MSE) 
of the mean, or the variance, or some other quantity of interest.}   
Here the densities will be the filtering density or approximations to it
and the randomness of the densities 
will come from the vector of past random observations
upon which the filtering density depends, as well as 
the random samples giving rise to the ensemble empirical measure
 in the case of EnKF.  
%\kl{ 
This section is concluded with a comment on the distance measure above.  Notice that 
 $\{|f|_\infty \leq 1\} \supset \{|f|_L \leq 1\}$, so the analogous distance measure over the former
 set of functions, denoted by $d_\infty(\cdot,\cdot)$ \footnote{This is the metric used in \cite{rebeschini2013can}.}, dominates the one used here. 
 %}.  
 Notice that 
 the metric $d_\infty$ is equivalent to total variation when the given measures are not random.
 This space of test functions is ubiquitous in probability theory, as it is dual to the space 
 of finite measures.  
% The proof of convergence of EnKF requires locally Lipschitz functions with at most polynomial 
% growth at infinity.
% Here it will be convenient to work with
% the given intersection of the test functions in order to combine convergence of 
% EnKF to MFEnKF with convergence of MFEnKF to the filtering distribution.

 \subsection{EnKF converges to MFEnKF}
 \label{sssec:enkfmfenkf}
 
As mentioned before, for clarity 
of exposition, 
time integration is assumed to be exact.  {The 
true filtering density at the $j^{th}$ observation time (i.e. $t_j=jh$) 
is denoted $\rho_{t_j}$, 
the MFEnKF density at the $j^{th}$ observation time is denoted $\ml_{t_j}$,
the density approximating $\ml_{t_j}$ with the DMFEnKF filter is denoted $\ml^N_{t_j}$, and 
the standard EnKF distribution is denoted $\hat{\ml}^N_{t_j}$.}
The sub-subscript notation is introduced for clarity because the increment
$h$ will vary.  Also, the kernel $\cK_h$ will replace \eqref{eq:dtf2} to denote the 
dependence on the increment $h$.
\todo{
%(in fact, the results are in $L^p$ and for more general test functions) 
%for the linear Gaussian case and .  
For the linear case, Theorem 1 of \cite{mandel2011convergence} 
shows that for {\it each ensemble member} $v^{N,(n)}_j$ of $\hat{\ml}^N_{t_j}$,
$\bbE|v^{N,(n)}_j - V^{N,(n)}_j|^p \rightarrow 0$, as $N\rightarrow \infty$,  
where $V^{N,(n)}_j$ is an i.i.d. draw of the limiting measure $\ml_{t_j}$, obtained using the 
{\it same realizations} of randomness as $v^{N,(n)}_j$, and dependence of the latter on
ensemble size $N$ is made explicit.  
This result for $p=2$ implies the convergence $d(\hat{\ml}_{t_j}^N, \ml_{t_j}) \rightarrow 0$,
by the triangle inequality, a fact stated explicitly for the unbounded
$f(u)=u$ and $f(u)=uu^\top$ in Corollary 1 of that paper.
For the special case of \eqref{eq:model}, 
Proposition 4.4 of \cite{le2011large} shows that in fact $(\bbE|v^{N,(n)}_j - V^{N,(n)}_j|^p)^{1/p} = \cO(N^{-1/2})$,
and Theorem 5.2 of that paper 
%\cite{mandel2011convergence} and 
uses this to show that %this implies that 
for all $f$ Lipschitz with polynomial growth at infinity, and for all $p$,
\begin{equation}
\left( \bbE \left |\int f (u) (\ml_{t_j}(u) - \hat{\ml}^N_{t_j}(u) ) du \right|^p \right)^{1/p} = \cO(N^{-1/2}),
\label{eq:leglp}
\end{equation}
%$d(\hat{\ml}_{t_j}^N, \ml_{t_j})=\cO(N^{-1/2})$ }
which implies $d(\hat{\ml}_{t_j}^N, \ml_{t_j})=\cO(N^{-1/2})$, 
since the latter is weaker, considering only $p=2$ and bounded $f$.} %$\varphi$.}
%Both papers use the notion of an empirical measure of i.i.d. samples of the limiting 
%measure $\ml_{t_j}$ in order to show convergence.
Assume the following.

\begin{assumption}[EnKF Assumptions]
\begin{itemize}
\item[(i)] For all $a,b \in \bbR^d$ and $u\sim \cK(a,\cdot)$, $v\sim \cK(b,\cdot)$, 
where the driving noise in $\cK$ is the same realization in each case,
$|u-v| \leq C |a-b|$ almost surely.
\item[(ii)] $\bbE |\cK(u,\cdot)|^{p} \leq C'(1+ \bbE |u|^{p})$, for all random variables $u \in \bbR^d$ with 
$\bbE|u|^{p} < \infty$ and for all $p \geq 2$.
\end{itemize}
\label{ass:enkf}
\end{assumption}
Both these conditions are satisfied by models of the form \eqref{eq:langevin} with Lipschitz drift, where the 
constants $C$ and $C'$ depend exponentially on the time increment $h$.  
See e.g. \cite{evans2012introduction} p. 95 for a clear and concise statement for $2p$, with integer $p\geq1$.
Interpolation completes the set of $p$.
Finally, assume $\rho_0=\ml_0$ has finite moments of {\it all} orders $p\geq 2$.  
\kl{The following theorem establishes the convergence of EnKF $\hat{\ml}^N_{t_j}$ 
to MFEnKF $\ml_{t_j}$.}
\
\begin{theorem}
Given Assumptions \ref{ass:enkf}, the following convergence result holds
\begin{equation}
\label{eq:enkfconv}
d(\hat{\ml}^N_{t_j},\ml_{t_j}) = \cO(N^{-1/2}).
\end{equation}
\label{enkftheo}
\end{theorem}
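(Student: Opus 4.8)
The plan is to follow the coupling argument of \cite{le2011large} (Proposition 4.4 and Theorem 5.2), replacing the explicit additive structure of model \eqref{eq:model} by the general kernel $\cK_h$ through Assumptions \ref{ass:enkf}. The central object is a coupling in which, for each $n$, the finite-ensemble member $v^{N,(n)}_j$ and an i.i.d.\ draw $V^{N,(n)}_j$ from $\ml_{t_j}$ are propagated with the \emph{same} realizations of the kernel noise and of the observation perturbation $\tilde{\eta}^{(n)}_{j+1}$, with $v^{N,(n)}_0=V^{N,(n)}_0$. I would prove by induction on $j$ that
\[
\epsilon_j := \max_n \left( \bbE \left|v^{N,(n)}_j - V^{N,(n)}_j\right|^p \right)^{1/p} = \cO(N^{-1/2})
\]
for every $p\geq 2$, with constant depending on $j$, $p$ and $h$; carrying all orders $p$ simultaneously is what later legitimizes the H\"older splittings. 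The metric bound \eqref{eq:enkfconv} is then extracted at the end from the case $p=2$.

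For the forecast step I would invoke Assumption \ref{ass:enkf}(i): since the two particles share the driving noise, $|\hat{v}^{N,(n)}_{j+1}-\hat{V}^{N,(n)}_{j+1}|\leq C|v^{N,(n)}_j-V^{N,(n)}_j|$ almost surely, so the forecast alters the error only by the deterministic factor $C$, while Assumption \ref{ass:enkf}(ii) keeps all moments of both families finite at every step. For the analysis step, since the perturbed observations coincide under the coupling, the difference telescopes to
\[
v^{N,(n)}_{j+1}-V^{N,(n)}_{j+1} = (I-K^N_{j+1}H)\big(\hat{v}^{N,(n)}_{j+1}-\hat{V}^{N,(n)}_{j+1}\big) + (K^N_{j+1}-K_{j+1})\big(\tilde{y}^{N,(n)}_{j+1}-H\hat{V}^{N,(n)}_{j+1}\big),
\]
where $K^N_{j+1}$ uses the sample mean and covariance of $\{\hat{v}^{N,(n)}_{j+1}\}$ and $K_{j+1}$ uses the exact ones of the limit. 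The first term is bounded in $L^p$, via H\"older and the finite moments of $K^N_{j+1}$, by a constant times the forecast estimate $C\epsilon_j$ (passing to a higher $L^{2p}$ norm, which is permissible since all orders are carried along).

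The crux is controlling the gain discrepancy $K^N_{j+1}-K_{j+1}$. Writing $K=\hat{C}H^TS^{-1}$ with $S=H\hat{C}H^T+\Gamma$, the uniform bound $S,S^N\succeq\Gamma\succ 0$ makes the covariance-to-gain map globally Lipschitz on the positive-semidefinite cone, giving $|K^N-K|\leq C(1+|\hat{C}|)\,|\hat{C}^N-\hat{C}|$ pointwise. I would then split $\hat{C}^N-\hat{C}=(\hat{C}^N-\bar{C}^N)+(\bar{C}^N-\hat{C})$, where $\bar{C}^N$ is the sample covariance of the i.i.d.\ family $\{\hat{V}^{N,(n)}_{j+1}\}$: the first difference is controlled by $\epsilon_j$ (close samples give close empirical covariances, the moment bounds handling the quadratic cross terms), and the second is a genuine Monte Carlo fluctuation of an i.i.d.\ average, hence $\cO(N^{-1/2})$ in every $L^p$ by the Marcinkiewicz--Zygmund inequality; the sample mean is treated analogously. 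Combining, and using H\"older with the propagated moment bounds to dispose of the $(1+|\hat{C}|)$ and $(\tilde{y}-H\hat{V})$ factors, yields a recursion $\epsilon_{j+1}\leq C_1\epsilon_j + C_2N^{-1/2}$, which with $\epsilon_0=0$ closes the induction. Finally, for any $f$ with $|f|_L\leq 1$ I would write
\[
\int f\,d\hat{\ml}^N_{t_j} - \int f\,d\ml_{t_j} = \frac1N\sum_n \big[f(v^{N,(n)}_j)-f(V^{N,(n)}_j)\big] + \Big[\frac1N\sum_n f(V^{N,(n)}_j) - \bbE f(V_j)\Big];
\]
the first bracket is $\cO(N^{-1/2})$ in $L^2$ by Lipschitzness of $f$, the induction, and convexity ($|\frac1N\sum a_n|^2\leq\frac1N\sum|a_n|^2$), while the second is $\cO(N^{-1/2})$ by the central-limit scaling of a bounded i.i.d.\ average, so taking the supremum over $f$ delivers \eqref{eq:enkfconv}. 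I expect the gain-discrepancy analysis, and in particular verifying that moments propagate through the nonlinear update so the H\"older splitting is licit, to be the main obstacle, since this is exactly the point at which the general kernel $\cK_h$ must be substituted for the explicit additive model of \cite{le2011large}.
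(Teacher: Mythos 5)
Your proposal is correct and takes essentially the same approach as the paper: the paper's proof simply observes that Assumptions \ref{ass:enkf}(i)--(ii) can replace Assumption A of \cite{le2011large}, so the coupling/induction argument of Proposition 4.4 and Theorem 5.2 there (forecast contraction via shared noise, moment propagation, gain-discrepancy control via sample-covariance concentration) goes through unchanged, and then extracts the metric bound from the $p=2$, bounded-Lipschitz case exactly as you do. Your sketch is in effect a detailed unpacking of the argument the paper cites by reference.
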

\begin{proof}
\kl{
%The two key points are the Lipschitz condition (i), and the existence of all moments for finite $j$, given the sa
%me for
%$\rho_0$.  The existence of all moments for finite $j$ follows from Assumption
%(ii) above, boundedness of the Kalman gain $|K_{t_j}|$, and the existence of all moments of the observation $y_{t_j}$.  
Notice that Assumptions \ref{ass:enkf} (i) and (ii) are sufficient to establish a priori $L^p$ estimates 
for the signal process, and the mean-field EnKF for all $p\geq 1$, %for finite $j$, 
given the same for $\rho_0$.  One simply replaces Assumption A in
Section 2.2 of \cite{le2011large} and follows exactly the same steps in that section.
The proof of \cite{le2011large} Theorem 5.2 then goes through exactly the same under the above assumptions for  
this broader class of models.  To see the relation to the error metric \eqref{eq:metric}
notice that for bounded Lipschitz $\phi$, one has $|\phi|_L<\infty$.  The collection of normalized
$\phi/|\phi|_L$ comprises the test functions used in \eqref{eq:metric}.  Theorem 5.2 of \cite{le2011large}
establishes the rate of $L^p$ convergence for 
%The proof indeed covers 
not only $p=2$ as in the metric \eqref{eq:metric}, and this class of test functions, 
but indeed for all $p\geq2$, and a more general
class of test functions $\phi$, which need not be bounded and can have polynomial growth at infinity,
as in equation \eqref{eq:leglp}.}
%See the supplementary material for a more detailed sketch of the proof.}
\end{proof}

We will furthermore assume the availability of a deterministic FP method as described 
in the previous section, involving a
numerical discretization of the Fokker-Planck equation and a numerical quadrature rule
for $d=1$ with minimal order of $\kappa$ between the two.  
Let $N$ denotes the 
number of degrees of freedom used in the approximation (evenly divided over the dimensions
as in a tensor product meshgrid for the FP methods).  
\kl{
The following theorem establishes the convergence of DMFEnKF ${\ml}^N_{t_j}$ 
to MFEnKF $\ml_{t_j}$.
Before stating the theorem two more assumptions will be necessary.
\begin{assumption}[DMFEnKF Assumptions]
\begin{itemize}
\item[(i)] The solutions %of the MFEnKF system 
$\ml_{t_j}$, and $\ml^N_{t_j}$ are continuous with respect to the spatial variable;
%(it would be unusual to find a suitable numerical method for a parabolic equation without this property).
\item[(ii)] There exists $\varepsilon>0$ such that 
$\varepsilon \leq \sigma(C_j), \sigma(C^N_j) \leq \varepsilon^{-1}$
for all $j$, where $\sigma(A)$ denotes the singular values of $A$.  
%Furthermore, there is a $\delta>0$ such that $\sigma (K_j), \sigma (K^{N}_j) > \delta$;
%$|K_{t_j}H|<1-\varepsilon$ and $|K^N_{t_j}H|<1-\varepsilon$, 
%where $K_{t_j}$ and $K^N_{t_j}$ are the Kalman gains at time 
%$t_j$ for $\ml_{t_j}$ and ${\ml}^N_{t_j}$, respectively 
%(notice if $H=I$, then it is guaranteed provided $|C_j^N|<\infty$;
%if $H\neq I$, then one also requires that there is some $\delta>0$ such that 
%min$\{\sigma(C_j^N), \sigma(C_j) \} \geq \delta > 0$).
\item[(iii)] The solution of $\ml_{t}$ has the property that $\ml_{t}<C e^{-|x|}$ 
%for some $\alpha\geq1$ 
for all $t$.  Furthermore, 
the domain for both MFEnKF and DMFEnKF are truncated at some $\Omega=\{|x| \leq R\}$, where
%$|\Omega|<R$ 
$R$ is fixed, resulting in a fixed bias (which will be ignored henceforth).  
Similarly the domain of integration of \eqref{eq:gupdate1} becomes $H\Omega \subset \bbR^m$;
\item[(iv)] 
There exists some $\alpha>0$ such that
\begin{equation}
\alpha \leq g(u,y) \leq \alpha^{-1} \quad \text{and} \quad \alpha \leq \hat{g}_{t_j}(\xi,y) \leq \alpha^{-1} \quad \quad
\forall ~u \in \bbR^d, ~ y, \xi \in \bbR^m, %~v\in U_{t_j}U_{t_j}^\top\bbR^d, %~ j \in \Z,
\label{eq:dublin}
\end{equation}
where $g$ is given by Eq. \eqref{eq:g} and $\hat{g}_{t_j}$ is given by \eqref{eq:gk}, respectively.
\end{itemize}
\label{ass:dmfenkf}
\end{assumption}

\begin{remark} The following remarks are in order in connection to the Assumptions above.
\begin{itemize}
\item[(a)] The forward model is assumed to be uniformly elliptic, hence with smooth solution.  
The update is a convolution with a Gaussian, which is again a smoothing operation.  So property (i)
is quite natural for the forward model.
It would be unusual to find a suitable numerical method for such parabolic equation without property (i).
\item[(b)] Notice that the bounds of (ii) are guaranteed for non-trivial (not deterministic) model \eqref{eq:dtf2}
for the mean-field process (see also proof of Theorem \ref{enkftheo}), as only the forward model can lead to 
zero covariance.  In particular, the uniformly elliptic assumption ($b>0$) ensures non-zero covariance. 
Such bounds can be \it{imposed} 
for the finite-ensemble process with an $N$-dependence to ensure they remain below the statistical error.
%Notice if $H=I$, then (ii) is guaranteed provided $|C_j^N|<\infty$;
%if $H\neq I$, then one also requires that there is some $\delta>0$ such that 
%min$\{\sigma(C_j^N), \sigma(C_j) \} \geq \delta > 0$.  This is because 
%Note that $0 < |A(A+B)^{-1}|, |(A+B)^{-1}A|<1$ for all $A, B$ symmetric with $B$ positive definite.  
%If $H=I$, then $A=C_j^{(N)}$ and $B=\Gamma$.  If $H\neq I$,
%then 
%Letting $B=(C_j^{(N)})^{-1}$ and $A=H^\top \Gamma^{-1} H$, one can see immediately that 
This implies the existence of a $\delta>0$ such that 
$|K^{(N)}H| = |((C_j^{(N)})^{-1}+H^\top \Gamma^{-1} H)^{-1}H^\top \Gamma^{-1} H| \leq 1-\delta$. 
 To see this, it suffices to observe that for $A=A^\top>0$ and $B=B^\top\geq 0$,
then $(A+B)^{-1}B = (I+A^{-1}B)^{-1}A^{-1}B$, where $A^{-1}=A^{-\top}>0$, and $A^{-1}B = A^{-1/2}(A^{-1/2}B)$ 
has the same eigenvalues as $A^{-1/2}BA^{-1/2}$, which is again symmetric positive semidefinite. %, with 
So $\sigma( I-K^{(N)}H ) > \delta$.
% \todo{reverify or assume!}.
Furthermore, one has 
$\sigma [ K^{(N)}] = \sigma[ %C_j^{(N)} (  ) ] %
((C_j^{(N)})^{-1}+H^\top \Gamma^{-1} H)^{-1}H^\top \Gamma^{-1} ] 
> \delta$ for some $\delta>0$. 
%, where $\sigma$ is used here to denote singular value.  
This follows from the assumptions that $\Gamma^{-1}$ is non-degenerate and $H$ has rank $m$, 
and the sum of a positive semidefinite and definite matrix is positive definite.   
\item[(c)] The assumption (iii) is made to avoid technical difficulties.  Notice that this induces a fixed bias of 
the size $e^{-R}$.  An $N-$dependent domain can be defined such that 
$R(N) = (\kappa/d + \delta) \log(N)$, for example, so the forward solve still has error 
$\cO(N^{-\kappa/d})$.  But this will mean $|\Omega(N)| = C R(N)^{d} = \cO(\log(N)^d)$, 
which will impact the rate for the DMFEnKF filter with the same log factor.  
Technical difficulties will therefore be avoided with the fixed bias, noting that the results hold only 
up to this bias level. 
\item[(d)] Assumption (iv) commonly appears in theoretical results for particle filters 
\cite{rebeschini2013can, BC09,  lsz2015, del2001stability}.
Such condition is typically necessary to prove convergence of filtering algorithms
due to the nonlinearity of the update. For observed $y$, Assumption (iii) implies (iv).  
%(in particular, the latter is required to ensure the stability
%of the constant in the second statement of \eqref{eq:erdisc}), 
%and it is rarely strictly satisfied in practice.  
%However, such conditions are often satisfied with a high probability, and furthermore 
%the filters often do work in practice despite not strictly satisfying such assumption. 
%This suggests more careful analysis is needed.  
%\kl{For the sake of carrying through the results here, 
%it suffices to consider the compactification of the domain at some sufficiently large radius 
%(say double the finest computational domain) so that \eqref{eq:dublin} holds.}
\item[(e)] Note the DMFEnKF method is not random, so $d_\infty$ becomes total variation
distance here, for observed $y$.
\end{itemize}
%the norm \eqref{}
\label{rem:dmfenkfass}
\end{remark}

\begin{theorem}
The following convergence result holds
\begin{equation}
d_\infty \left (\ml^N_{t_j},\ml_{t_j} \right) = \cO\left(%\frac{1}
{N^{-\kappa/d}}\right). 
\label{eq:erdisc}
\end{equation} 
\label{thm:dmfenkf}
\end{theorem}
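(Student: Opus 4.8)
The plan is to run an inductive telescoping argument over the finitely many observation times, reducing the global error to an accumulation of one-step local truncation errors that are each controlled by the assumed order of the Fokker--Planck solver and the quadrature rule. Write the exact MFEnKF recursion as $\ml_{t_j} = \cC_j^G \cK^\top \ml_{t_{j-1}}$ and its numerical counterpart as $\ml^N_{t_j} = \cC_j^{G,N} \cK_N^\top \ml^N_{t_{j-1}}$, where $\cK_N^\top$ denotes the discretized forecast (the numerical Fokker--Planck solve of \eqref{eq:fok1} plus interpolation) and $\cC_j^{G,N}$ denotes the discretized update (numerical mean and covariance, the change of variables and determinant, and the quadrature approximation of the convolution in \eqref{eq:gupdate1}). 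Setting $e_j = d_\infty(\ml^N_{t_j},\ml_{t_j})$, note $e_0=0$ since both filters are initialized at the same density. Inserting intermediate terms and using the triangle inequality for $d_\infty$,
\begin{align*}
e_j &\le d_\infty\!\left(\cC_j^{G,N}\cK_N^\top \ml^N_{t_{j-1}},\, \cC_j^{G}\cK_N^\top \ml^N_{t_{j-1}}\right) \\
&\quad + d_\infty\!\left(\cC_j^{G}\cK_N^\top \ml^N_{t_{j-1}},\, \cC_j^{G}\cK^\top \ml^N_{t_{j-1}}\right) \\
&\quad + d_\infty\!\left(\cC_j^{G}\cK^\top \ml^N_{t_{j-1}},\, \cC_j^{G}\cK^\top \ml_{t_{j-1}}\right),
\end{align*}
the three terms being the local update error, the forecast discretization error propagated through the exact update, and the propagation of the previous step's error through the exact one-step map.

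Next I would bound the two local terms. For the forecast, the order-$\kappa_1$ discretization on the tensor grid yields $\cK_N^\top \ml^N_{t_{j-1}} - \cK^\top \ml^N_{t_{j-1}} = \cO(N^{-\kappa_1/d})$ in sup norm over the truncated domain $\Omega$ of Assumption \ref{ass:dmfenkf}(iii); multiplying by $|\Omega|$ gives the same order in $d_\infty$ (which, being deterministic here, is the $L^1$/total-variation distance), and I then propagate it through $\cC_j^G$, which is bounded in $d_\infty$. For the update term I would invoke Assumption \ref{ass:dmfenkf}(iv): the lower bounds $\alpha \le g,\hat{g}_{t_j}$ keep the normalizing denominators bounded away from zero, so the order-$\kappa_2$ quadrature error in both the normalization and the convolution of \eqref{eq:gupdate1} is $\cO(N^{-\kappa_2/d})$ in $d_\infty$; the smoothness needed for the quadrature order follows from Assumption \ref{ass:dmfenkf}(i). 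Taking $\kappa=\min\{\kappa_1,\kappa_2\}$ bounds both local terms by $CN^{-\kappa/d}$.

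The main obstacle is the third (stability) term, i.e. showing that the exact one-step map $\cC_j^G \cK^\top$ is Lipschitz in $d_\infty$ with a constant $L_j$ independent of $N$. The forecast $\cK^\top$ is a Markov (smoothing) operator and hence non-expansive in total variation, and the convolution with $\hat{g}_{t_j}$ in \eqref{eq:gupdate1} is likewise a contraction; the delicate point is that $\cC_j^G$ depends on the input density \emph{nonlinearly} through its mean and covariance, which determine the gain $K_j$, the contractive change of variables $u\mapsto(I-K_jH)u$, and its determinant. Since $d_\infty$ tests only against bounded functions, differences in mean and covariance are not directly controlled by $d_\infty$, which is exactly why Assumption \ref{ass:dmfenkf}(iii) is essential: on the compact domain $\Omega$ the moment functionals $u$ and $uu^\top$ are bounded (by $R$ and $R^2$), so the mean, covariance and hence $K_j$ are Lipschitz in the density with respect to $d_\infty$, with constants depending on $R$ and the singular-value bounds $\varepsilon$ of Assumption \ref{ass:dmfenkf}(ii). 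The estimates $|K_jH|\le 1-\delta$ and $\sigma(I-K_jH)>\delta$ from Remark \ref{rem:dmfenkfass}(b) then keep the change of variables and its determinant stable, so $\cC_j^G$ is Lipschitz in $d_\infty$. The recursion thus reads $e_j \le L_j e_{j-1} + C N^{-\kappa/d}$, and since $e_0=0$ and the number of observation times is finite, a discrete Gr\"onwall argument gives $e_j = \cO(N^{-\kappa/d})$, as claimed.
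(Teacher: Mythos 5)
Your proposal is correct and takes essentially the same route as the paper: induction over the (finitely many) observation times, a triangle-inequality split of the per-step error into local discretization/quadrature error and propagated error, non-expansiveness of the exact Markov forecast in $d_\infty$, and continuity of the nonlinear update in the input density via moment control on the truncated domain $\Omega$, the gain continuity $|K-K'|\lesssim|C-C'|$, and the singular-value bounds of Assumption \ref{ass:dmfenkf}(ii). The one remark worth making is that the step you compress into ``so $\cC_j^G$ is Lipschitz in $d_\infty$'' is where the paper expends most of its effort: beyond the determinant and the change of variables, the convolution kernel $\hat{g}_j$ in \eqref{eq:gupdate1} depends on the SVD factors $U_j,\Sigma_j,V_j$ of $K_j$ individually, so the paper must prove perturbation bounds $|\Sigma-\Sigma^N|\lesssim|K-K^N|$ and $|V-V^N|\lesssim|K-K^N|$ before the pointwise kernel estimate and the final operator decomposition go through --- but this is exactly the mechanism your plan points to, not a different one.
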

\begin{proof} 
The proof is by induction and holds trivially for $t_0=0$.
Assume it holds at step $t_{j-1}$, so 
$d_\infty\left (\ml^N_{t_{j-1}},\ml_{t_{j-1}} \right) = \cO(N^{-\kappa/d})$.  
Let $\cK^N$ denote the propagation through the deterministic FP approximation
with $N$ spatial degrees of freedom.  The triangle inequality provides
\[
\begin{split}
d_\infty\left ((\cK^N)^\top \ml^N_{t_{j-1}}, \cK^\top \ml_{t_{j-1}} \right) & \leq 
d_\infty\left ((\cK^N)^\top \ml^N_{t_{j-1}}, \cK^\top \ml^N_{t_{j-1}} \right) \\ & + 
d_\infty\left (\cK^\top \ml^N_{t_{j-1}}, \cK^\top \ml_{t_{j-1}} \right) \lesssim N^{-\kappa/d},
\end{split}
\]
where the first term is controlled by the assumed property of the kernel $\cK^N$
and the second term is controlled by the induction hypothesis 
and the standard property of Markov kernels (see for example Lemma 4.9 of 
\cite{lsz2015}).
Denote $\tilde{\ml}_{t_{j}}^N := (\cK^N)^\top \ml^N_{t_{j-1}}$ 
and $\tilde{\ml}_{t_{j}} := \cK^\top \ml_{t_{j-1}}$, so we have 
$d_\infty\left (\tilde{\ml}^N_{t_{j}}, \tilde{\ml}_{t_{j}} \right) \lesssim N^{-\kappa/d}$.

Now, the update is a bit more technical.  
One must first note the continuity of $K$ as a function of $C$.  This may be found for example in
Proposition 3.1 of \cite{le2011large} or Lemma 5.1 of \cite{kwiatkowski2015convergence}.  
For $K^{(')} = C^{(')} H^\top (H C^{(')} H^\top +\Gamma)^{-1}$ one has 
\begin{equation}
\label{eq:continuity}
|K-K'| \lesssim |C-C'|.
\end{equation}

%Notice that, by virtue of the fact that $\tilde{\ml}^{N}_{t_{j}}$ is defined only on
%some numerical domain $\Omega^N$, one has that
%$|\tilde{\ml}_{t_{j}} - \tilde{\ml}^{N}_{t_{j}}| = 
%|\tilde{\ml}_{t_{j}}{\mathbf 1}_{\Omega^N} - \tilde{\ml}^{N}_{t_{j}}| + 
%|\tilde{\ml}_{t_{j}}{\mathbf 1}_{\bbR \setminus \Omega^N}|,$
%with ${\mathbf 1}_{A}(x)=\{1, x\in A, 0 ~{\rm else}\}$ the indicator function
%on the set $A$.  
%Hence, both terms share 
%the same bound $\cO(N^{-\kappa/d})$.

Recall Assumption \ref{ass:dmfenkf}(iii) and Remark \ref{rem:dmfenkfass}.
The Frobenius norm bounds the spectral norm, and, since %\newline
$(\tilde{C}^{(N)}_{t_j})_{ik} = \int_{\bbR^d} u_iu_k \tilde{\ml}^{(N)}_{t_{j}}(u) du$ 
 (assuming WLOG $\bbE^{(N)}u_i=0$ for all $i$ for notational simplicity alone)
for all $i,k=1,\dots,d$, one has 
\[\begin{split}
|(\tilde{C}^{N}_{t_j})_{ik} - (\tilde{C}_{t_j})_{ik} | & = 
\int_{\Omega} u_iu_k [ \tilde{\ml}^{N}_{t_{j}}(u) -  \tilde{\ml}_{t_{j}}(u) ] du %\\
%& 
\lesssim d_\infty (\tilde{\ml}^{N}_{t_{j}},\tilde{\ml}_{t_{j}})
% + {\color{red} \int_{\bbR^d \setminus \Omega^N} u_i u_k \tilde{\ml}_{t_{j}}(u) du}
\lesssim N^{-\kappa/d}.
\end{split}
\]
Therefore, 
\begin{equation}
|K^N_{t_j}-K_{t_j}| \lesssim |\tilde{C}^{N}_{t_j} - \tilde{C}_{t_j}| = \cO(N^{-\kappa/d}).
\label{eq:bndk}
\end{equation}

Now it will be shown that 
%\[\begin{split}
\begin{eqnarray}
\nonumber
d_\infty\Big(  & {\rm det}[I-K^N_{t_j}H]^{-1}\tilde{\ml}^N_{t_{j}}((I-K^N_{t_j}H)^{-1}~\cdot~), \\
& {\rm det}[I-K_{t_j}H]^{-1} \tilde{\ml}_{t_{j}}((I-K_{t_j}H)^{-1}~\cdot~) \Big)
 \lesssim N^{-\kappa/d}. \, 
\label{eq:preup}
 \end{eqnarray}
 %\end{split}
%\]
This can be decomposed into a sum of three terms, which will be handled individually.

First term: 
\[\begin{split}
d_\infty\Big(  & {\rm det}[I-K^N_{t_j}H]^{-1}\tilde{\ml}_{t_{j}}((I-K_{t_j}H)^{-1}~\cdot~), \\
& {\rm det}[I-K_{t_j}H]^{-1} \tilde{\ml}_{t_{j}}((I-K_{t_j}H)^{-1}~\cdot~) \Big) \\ \lesssim 
& |{\rm det}[I-K^N_{t_j}H]^{-1} - {\rm det}[I-K_{t_j}H]^{-1}| \\ \lesssim 
& {\rm det}[I-K_{t_j}H]^{-1} | 1 - {\rm det}[I-K^N_{t_j}H]{\rm det}[I-K_{t_j}H]^{-1} | 
%\\ \lesssim
%&  N^{-\kappa(d-1)/d} < 
\lesssim \cO(N^{-\kappa/d}).
\end{split}\]
The last line follows from the fact that ${\rm det}[I-K_{t_j}H]^{-1}<\infty$ by Assumpiton (ii), 
the fact that $|I-[I-K^N_{t_j}H][I-K_{t_j}H]^{-1}| \leq |I-K_{t_j}H|^{-1} |H| |K^N_{t_j}-K_{t_j}| \lesssim N^{-\kappa/d}$ 
by \eqref{eq:bndk}, 
and the fact that if $|I-A| = \varepsilon$, then for all eigenvalues $\lambda_k$ of $A$, one has 
%such that there exists a $v$ with
%$A v_k = \lambda_k v_k$, 
$\lambda_k\leq 1+\varepsilon$, so  
${\rm det}(A) =\prod_{k=1}^d \lambda_k \leq (1+\varepsilon)^d = 1 + \cO(\varepsilon)$. 
%$|1-{\rm det}(A)| $

Second term:
  \[\begin{split}
  d_\infty\Big(  & {\rm det}[I-K^N_{t_j}H]^{-1}\tilde{\ml}_{t_{j}}((I-K^N_{t_j}H)^{-1}~\cdot~), \\
  & {\rm det}[I-K^N_{t_j}H]^{-1} \tilde{\ml}_{t_{j}}((I-K_{t_j}H)^{-1}~\cdot~) \Big) \\
  & \lesssim %\delta^{-1} 
  \int_{\Omega}
  |\tilde{\ml}_{t_{j}}((I-K^N_{t_j}H)^{-1}u) - \tilde{\ml}_{t_{j}}((I-K_{t_j}H)^{-1}u)| du  \\
  & \lesssim %\delta^{-1} 
  |\Omega | |(I-K^N_{t_j}H)^{-1} -  (I-K_{t_j}H)^{-1}|,
  \end{split}\]
  where %$\delta$ is 
  the first bound follows from Assumption \ref{ass:dmfenkf}(ii) (see also the Remark \ref{rem:dmfenkfass}b) 
  and the second follows from the continuity and bounded domain Assumptions \ref{ass:dmfenkf}(i) and (iii).
  Notice that 
  $$
 ( I-K^N_{t_j}H)^{-1} -  (I-K_{t_j}H)^{-1} = (I-K^N_{t_j}H)^{-1}[(K_{t_j} - K^N_{t_j}) H](I-K_{t_j}H)^{-1}.
  $$
  Therefore, the required bound follows from the bound on $|K_{t_j} - K^N_{t_j}|$ \eqref{eq:bndk}, 
  noting again Assumption \ref{ass:dmfenkf}(ii).  
%Since %${\rm det}[I-K^N_{t_j}H]^{-1}<\infty$

Third term: this is simply a change of variables, so
\[\begin{split}
d_\infty\Big(  & {\rm det}[I-K^N_{t_j}H]^{-1}\tilde{\ml}^N_{t_{j}}((I-K^N_{t_j}H)^{-1}~\cdot~), \\
& {\rm det}[I-K^N_{t_j}H]^{-1} \tilde{\ml}_{t_{j}}((I-K^N_{t_j}H)^{-1}~\cdot~) \Big) =
d_\infty (\tilde{\ml}^{N}_{t_{j}},\tilde{\ml}_{t_{j}}).
\end{split}\]

Now consider $\tilde{g}_{t_j}$ and $\hat{g}_{t_j}$ 
defined in \eqref{eq:dmfenkfg} and \eqref{eq:gk}, and 
let $\tilde{g}^N_{t_j}$ and $\hat{g}^N_{t_j}$ denote the DMFEnKF
versions with $N$ degrees of freedom.
For simplicity of notation we will % (and without loss of generality), 
drop the $t_j$.  First consider the case in which 
$K^{(N)}=\Sigma^{(N)}$ is already diagonal and let $\Gamma=I$.
So, by continuity of $f(x)=e^{-x^2}$,
\[
\begin{split}
|\tilde{g}(\xi) - \tilde{g}^N(\xi)| & \lesssim |\Sigma^{-1} (\xi - \Sigma y) - (\Sigma^N)^{-1}(\xi - \Sigma^N y)|  \\
& \leq | [ \Sigma^{-1} - (\Sigma^N)^{-1} ] (\xi - \Sigma y) | +  |\Sigma^N|^{-1} | (\Sigma - \Sigma^N) y |  \\
& \leq | (\Sigma^N)^{-1}( \Sigma - \Sigma^N ) \Sigma^{-1} ( \xi - \Sigma y) | +   |\Sigma^N|^{-1}|\Sigma - \Sigma^N| |y| \\
& \leq |(\Sigma^N)^{-1}| (|\Sigma^{-1}| 
%|\Sigma + \Sigma^N|  
+1)| \Sigma - \Sigma^N | %+   |\Sigma^N|^{-2} |\Sigma - \Sigma^N|
= \cO(N^{-\kappa/d}),
\end{split}
\]
where the last line follows from the boundedness of $\Sigma^{(N)}$, following from 
Assumption \ref{ass:dmfenkf}(ii) (see also the Remark \ref{rem:dmfenkfass}b).

The extension to non-diagonal $K^{(N)}$ follows from two arguments.  First, one has 
\begin{equation}
\label{eq:sigk}
| \Sigma - \Sigma^N | \lesssim | K - K^N|
\end{equation} 
 by continuity of the determinant,
hence the characteristic function $\rho(\lambda, A) = {\rm det} (\lambda I - A)$, as a function 
of the matrix $A$.   Define $B^{(N)} = \Sigma^{(N)} V^{(N)}$.
Now, similarly to above, we will need to bound 
$|A_{(\Gamma)}-A_{(\Gamma)}^{(N)}|$, 
where $A_{(\Gamma)}^{(N)} = B^{(N)} %\Sigma^{(N)} V^{(N)} 
(\Gamma) [B^{(N)}]^\top$,
%(V^{(N)})^\top \Sigma^{(N)}$,
and hence $|B^N-B|$.
%\Sigma^{N} V^{N} - \Sigma V|$. 
Now notice that
%\[
%\begin{split}
\begin{eqnarray}
\label{eq:a}
|A^N - A | & = & |[K^N]^\top K^N - K^\top K | \leq 
2 {\rm max}\{|K|,|K^N|\} |K-K^N|, \\
|[\Sigma^N]^2 - \Sigma^2 | & \leq & 2 {\rm max}\{|\Sigma|,|\Sigma^N| \} |\Sigma - \Sigma^N|,
\label{eq:sig}
\end{eqnarray}
%\end{split}
%\]
and
%\[
%\begin{split}
\begin{eqnarray}
\label{eq:agam}
|A_\Gamma^N - A_\Gamma | & = & |[K^N]^\top[\Sigma^N]^2 K^N - K^\top [\Sigma]^2 K | \\ 
& \leq  & 2 {\rm max}\{|K|,|K^N|\} |\Sigma^N| |K-K^N| + |K|^2 |[\Sigma]^2 - [\Sigma^N]^2 |. 
\nonumber
\end{eqnarray}
%\end{split}
%\]
Define $a^{(N)}= \xi - B^{(N)}y$.  Then, using continuity of $e^{-x}$, one must bound
%\[
%\begin{split}
\begin{eqnarray}
\nonumber
|\tilde{g}(\xi) - \tilde{g}^N(\xi)| & \lesssim& C | \langle a^N,A_\Gamma^Na^N\rangle - \langle a, A_\Gamma a \rangle| \\
%= C |\Gamma^{1/2} [B^N]^\top a^N| - B^\top \Gamma a \rangle|\\
 \nonumber
 & \leq & |\langle a, (A_\Gamma^N-A_\Gamma) a \rangle| + |\langle (a-a^N), A_\Gamma a\rangle| + |\langle a^N, A_\Gamma (a-a^N) \rangle| \\
& \leq & |a|^2 |A_\Gamma^N-A_\Gamma| + 2 {\rm max} \{|a|,|a^N|\}|A_\Gamma| |a - a^N|.
\label{eq:ga}
\end{eqnarray}
%\end{split}
%\]
Applying the bounds \eqref{eq:sig}, \eqref{eq:sigk}, and \eqref{eq:bndk} to \eqref{eq:agam}, and recalling the boundedness 
of the domain from Assumption \ref{ass:dmfenkf}(iii) takes care of the first term, and leaves $ |a - a^N|$
for the second term.
Now one must %use contradiction to 
show that $|V-V^N| \lesssim N^{-\kappa/d}$.
Choose a single eigenpair $(\lambda,v)$ associated to $A$, and one $(\lambda',v')$ associated to $A'$, where
$|\lambda - \lambda'| = \cO(\varepsilon)$ and $|A-A'| = \cO(\varepsilon)$.
%, but 
%$|v-v'|/\varepsilon \rightarrow \infty$.
One has the following
\[
\begin{split}
\frac{1}{\varepsilon} | v - v' | & \leq  (\varepsilon |A' -  \lambda' I | )^{-1} | (A' -  \lambda' I) (v-v') | \\
%= C |\Gamma^{1/2} [B^N]^\top a^N| - B^\top \Gamma a \rangle|\\
\nonumber
 & =  (\varepsilon |A' -  \lambda' I | )^{-1} |(A'-A)v + (\lambda - \lambda') v| \\
 &  \leq  (\varepsilon |A' -  \lambda' I | )^{-1} ( |A'-A| + |\lambda - \lambda'|) \leq C,
\end{split}
\]
for some $C$.  Hence $ | v - v' | = \cO(\varepsilon)$.  This proves $|V - V^N| = \cO(N^{-\kappa/d})$, 
following from \eqref{eq:a} and \eqref{eq:sig}.
Now observe
\[
\begin{split}
 |a - a^N| & = | ( \Sigma V - \Sigma^N V^N ) y | \leq | ( \Sigma - \Sigma^N) V y | + | \Sigma^N( V - V^N ) y | \\
  & \leq |V| | \Sigma - \Sigma^N | +  | \Sigma^N | |y|  | V - V^N |.
\end{split}
\]
Combining this with \eqref{eq:ga} then provides the desired point-wise bound 
\begin{equation}
|\tilde{g}(\xi) - \tilde{g}^N(\xi)| = \cO(N^{-\kappa/d}).
\label{eq:gfin}
\end{equation}

% \[
%\begin{split}
%|\tilde{g}(\xi) - \tilde{g}^N(\xi)| & \leq C |\Sigma^{-2} (\xi - \Sigma y) - (\Sigma^N)^{-2}(\xi - \Sigma^N y)|  \\
%& \leq | [ \Sigma^{-2} - (\Sigma^N)^{-2} ] (\xi - \Sigma y) | +  |\Sigma^N|^{-2} | (\Sigma - \Sigma^N) y |  \\
%& \leq | (\Sigma^N)^{-2}( \Sigma^{2} - (\Sigma^N)^{2} ) \Sigma^{-2} | |\xi - \Sigma y| +   |\Sigma^N|^{-2}|\Sigma - \Sigma^N| |y| \\
%& \leq |\Sigma^N|^{-2} (|\Sigma|^{-2} |\Sigma + \Sigma^N|  +1)| \Sigma - \Sigma^N | %+   |\Sigma^N|^{-2} |\Sigma - \Sigma^N|
%= \cO(N^{-\kappa/d}),
%\end{split}
%\]

Denote $p^{(N)}(\cdot) = {\rm det}[I-K^{(N)}_{t_j}H]^{-1}\tilde{\ml}^{(N)}_{t_{j}}((I-K^{(N)}_{t_j}H)^{-1}~\cdot~)$, and recall
equation \eqref{eq:preup} shows $d_\infty(p^{N}, p) = \cO( N^{-\kappa/d} )$.

Observe the decomposition
\begin{equation}
%\[
%\begin{split}
\cC^{G,N} p^N - \cC^G p   = (\cC^{G,N} - \cC^G) p + C^{G,N}(p^N - p). %\\
%& = \int \Big(\frac{\tilde{g}^N(\xi)}{\int \tilde{g}^N} - \frac{\tilde{g}(\xi)}{\int \tilde{g}}\Big)p (x - \xi) d\xi +
%\int \frac{\tilde{g}^N(\xi)}{\int \tilde{g}^N}  [p (x - \xi) - p^N (x - \xi) ] d\xi
%\end{split}
%\]
\label{eq:upsplit}
\end{equation}
%Notice that the second term is bounded
For the second term, Assumption \ref{ass:dmfenkf}(iii) provides
\begin{equation}
|\cC^{G,N}(p^N - p)(f)| = \Big | \int_\Omega f(x) \int_{H\Omega} \frac{\tilde{g}^N(\xi)}{\int_{H\Omega} \tilde{g}^N}  [p (x - \xi) - p^N (x - \xi) ] d\xi dx \Big | \leq C d_\infty(p^N,p),
\label{eq:cg1}\end{equation}
so \eqref{eq:preup} controls the second term.
The first term is decomposed as follows:
\[
\begin{split}
[(\cC^{G,N} - \cC^G) p ] (x) %\\
& = \int_{H\Omega} \frac{\tilde{g}^N(\xi)}{\int_{H\Omega} \tilde{g}^N \int \tilde{g}} p (x - \xi) d\xi \Big(\int_{H\Omega} [\tilde{g}(\xi) - \tilde{g}^N(\xi)] d\xi\Big) \\
& + \frac{1}{\int_{H\Omega} \tilde{g}} \int_{H\Omega} [\tilde{g}^N(\xi) - \tilde{g}(\xi)] p (x - \xi) d\xi.
\end{split}
\]
Assumption \eqref{ass:dmfenkf}(iii) and the pointwise bound \eqref{eq:gfin} finally allow one 
to conclude that 
\begin{equation}
d_\infty(\cC^{G,N}p, \cC^G p) = \cO(N^{-\kappa/d}).
\label{eq:cg}
\end{equation}
The induction is now complete so $d_\infty \left (\ml^N_{t_j},\ml_{t_j} \right) = 
d_\infty(\cC^{G,N} p^N,\cC^G p) = \cO\left(%\frac{1}
{N^{-\kappa/d}}\right)$, by \eqref{eq:upsplit}, \eqref{eq:cg1} and \eqref{eq:cg}.
%The proof follows 
%by first noticing that difference error in the finite approximation 
%of \eqref{eq:gupdate1} arises due to a quadrature rule for the integration 
%and the finite approximation of $\hat{g}_{t_j}$ as defined in Eq. \eqref{eq:gk}  
%by $\hat{g}_{t_j}^N$.  
%The approximation error arising from  $\hat{g}_{t_j}^N$ 
%inherits the error from the finite approximation of $K^N_{t_j}$, 
%hence $\widehat{C}^N_{t_j}$ from Section \ref{ssec:mfenkf} by continuity, 
%and hence finally the deterministic FP approximation of 
%$\cK_h \ml^N_{j-1}$, where $\cK_h$ is the forward kernel defined in 
%\eqref{eq:dtf2}, for time increment $h$.  
%The remainder of the proof follows 
%the one for particle filter convergence \cite{rebeschini2013can, lsz2015} 
%except with $\hat{g}_{t_j}$ replacing
%$g_{t_j}$.  In other words, the rate of convergence persists over multiple steps with 
%a constant growing no faster than geometrically in time.   
\end{proof}}

It is worthwhile to note that similar analysis using the 
full likelihood $g$ results in the same asymptotic estimate for the deterministic 
approximation of the full filtering distribution, the full FP filter.
As a consequence, the full FP filter
will asymptotically outperform the Monte Carlo particle filter, 
which has $\cO(N^{-1/2})$ rate of convergence to the actual filtering distribution, 
for $d<2\kappa$ in the case of this most naive discretization.

\subsection{Main theoretical result}
\label{ssec:maintheory}

%%%%%%%%%%%%%%%%%%%%%%%%%%%%%%%%%%%%%%%%%%%%%%%%%%%%%%%%%%%%%
\kl{Recall the definitions of the EnKF density $\hat{\ml}^N_{t_j}$,
the DMFEnKF density $\ml^N_{t_j}$, and the posterior $\rho_{t_j}$.}
It is clear that the error of the approximating density decomposes 
into (i) finite resolution error, arising in the form of either statistical error from the 
ensemble approximation, or discretization error for the FP filters, and (ii)
bias error, arising from the linear/Gaussian assumptions underpinning the method:
\begin{equation}
\arraycolsep=1.4pt\def\arraystretch{2.2}
\begin{array}{ccccc}
d\left (\hat{\ml}^N_{t_j},\rho_{t_j} \right ) &\leq& \underbrace{d \left (\hat{\ml}^N_{t_j},\ml_{t_j} \right )}_{\text{ensemble error}} & + &
\underbrace{d \left (\ml_{t_j},\rho_{t_j} \right )}_{\text{Gaussian error}}, \\
d\left (\ml^N_{t_j},\rho_{t_j} \right) &\leq& \underbrace{d \left (\ml^N_{t_j},\ml_{t_j} \right )}_{\text{discretization error}} & + &
\underbrace{d \left (\ml_{t_j},\rho_{t_j} \right)}_{\text{Gaussian error}}, 
\label{eq:er1step}
\end{array}
\end{equation}
{where one recalls the definition of the MFEnKF density $\pi_{t_j}$. }

Recall the operators $\cK_h$, 
$\cC_{t_j}$, and $\cC^G_{t_j}$ defined by 
Eqs. \eqref{eq:dtf2} (with subscript $h$ indicating the observation time increment), 
\eqref{eq:update}, and \eqref{eq:gupdate1},  
and the definitions of $g(\cdot,\cdot)$ and $\hat{g}_{t_j}(\cdot,\cdot)$
from Eqs. \eqref{eq:g} and \eqref{eq:gk}.
Note that, since $\cK_h$ is a Markov transition kernel, by Lemma 4.8 of \cite{lsz2015}
and Assumption \ref{ass:enkf}(i) (which allows generalization to the metric $d$ from $d_\infty$) 
\begin{equation}
d\left (\cK_h^\top \pl, \cK_h^\top \pl' \right ) \leq d\left( \pl, \pl' \right ), \quad \quad 
\forall ~ \pl, \pl'  \in \cP(\bbR^d), ~ \forall h>0. 
\label{eq:erfwd}
\end{equation} 
Define the Gaussian projection $G$ of a density $\pl\in \cP(\bbR^d)$ as follows
\begin{eqnarray}
m&=&\int u \pl(u) du, \\
C&=& \int \left [ (u-m) \otimes (u-m) \right] \pl(u) du, \\
%\hat{\pl}(u) &=& \exp \left \{ -\frac12\left | u - m \right|^2_C \right\}, \quad \forall u\in \bbR^d, \\
(G \pl)(u) & := &  (2 \pi)^{-d/2}|C|^{-1/2} \exp \left \{ -\frac12\left | u - m \right|^2_C \right\}, \quad \forall u\in \bbR^d.
%\frac{\hat{\pl}}{\int \hat{\pl}}.
\label{eq:gaussproj}
\end{eqnarray}
Now denote $\pl^G = G \pl$ and $\pl^{NG} = \pl - \pl^G$.  Notice that the remainder 
is not positive and $\int \pl^{NG} = 0$.

%Next we make 
Here an additional assumption is made of 
sufficient smoothness and decay of 
$\ml_{t_j}$ and $\rho_{t_j}$ so that 
\begin{equation}
\cK_h^\top \ml_{t_j} = \ml_{t_j} + \cO(h), \quad {\rm and} \quad
\cK_h^\top \rho_{t_j} = \rho_{t_j} + \cO(h),  
\label{eq:smooth_pert}
\end{equation}
where here and in what follows, $\cO$ will refer to boundedness up to a constant in the 
metric \eqref{eq:metric} except with the supremum taken over the class of functions
 $\{|f|_\infty \leq 1\} \supset \{|f|_L \leq 1\}$.

 \begin{theorem}
 \label{thetheo}
Given an observation {time} increment $h$, and under Assumptions \ref{ass:enkf} and \ref{ass:dmfenkf},
%\eqref{eq:dublin},  
the following asymptotic error bounds hold
as $h \rightarrow 0$ and $N\rightarrow \infty$
$$
d\left (\ml^N_{t_j},\rho_{t_j} \right) = \cO(N^{-\kappa/d}+\lambda h)  \quad \text{and} \quad 
d\left (\hat{\ml}^N_{t_j},\rho_{t_j} \right ) = \cO(N^{-1/2}+\lambda h)\quad ~ \text{for any finite} ~j,
$$
where $\lambda=0$ if $F$ in \eqref{eq:langevin} is linear and $\lambda=1$ if $F$ is nonlinear.
 \end{theorem}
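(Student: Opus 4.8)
The plan is to combine the two convergence results already established with a separate analysis of the Gaussian bias. By the triangle-inequality decomposition \eqref{eq:er1step} it suffices, in addition to the finite-resolution terms, to control the \emph{Gaussian error} $d(\ml_{t_j},\rho_{t_j})$. Theorem \ref{thm:dmfenkf} supplies $d(\ml^N_{t_j},\ml_{t_j})=\cO(N^{-\kappa/d})$ and Theorem \ref{enkftheo} supplies $d(\hat{\ml}^N_{t_j},\ml_{t_j})=\cO(N^{-1/2})$, so the entire remaining task is to prove $d(\ml_{t_j},\rho_{t_j})=\cO(\lambda h)$ for each fixed $j$. I would prove this in the dominating metric $d_\infty$ (recall $d\le d_\infty$) by induction on $j$, starting from $\ml_0=\rho_0$ (taken Gaussian) so that the base case vanishes.

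The engine of the induction is the algebraic fact that the Bayesian update $\cC_{t_j}$ and the MFEnKF update $\cC^G_{t_j}$ \emph{coincide on Gaussians}: for Gaussian $\gamma$ both return the Gaussian with the Kalman mean and covariance (the covariance of the affine map in \eqref{eq:gupdate1} collapses to $(I-KH)\hat C$, matching the conditioned Gaussian). Moreover, since the Gaussian projection $G$ of \eqref{eq:gaussproj} preserves mean and covariance, the gain $K$ entering $\cC^G_{t_j}$ is identical whether computed from $\hat{\rho}$ or from $G\hat{\rho}$; with the gain fixed, $\cC^G_{t_j}$ is a contractive affine change of variables ($|I-KH|\le1$ by Remark \ref{rem:dmfenkfass}(b)) followed by convolution with a fixed density, hence a contraction, while $\cC_{t_j}$ is Lipschitz stable by the uniform-likelihood Assumption \ref{ass:dmfenkf}(iv). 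Writing $\hat{\ml}_j=\cK_h^\top\ml_{t_{j-1}}$ and $\hat{\rho}_j=\cK_h^\top\rho_{t_{j-1}}$, I would split $d_\infty(\ml_{t_j},\rho_{t_j})$ into $d_\infty(\cC^G_{t_j}\hat{\ml}_j,\cC^G_{t_j}\hat{\rho}_j)+d_\infty(\cC^G_{t_j}\hat{\rho}_j,\cC_{t_j}\hat{\rho}_j)$. The first (propagated bias) is bounded, via the varying-gain stability of $\cC^G$ — precisely the chain \eqref{eq:bndk}--\eqref{eq:gfin} from the proof of Theorem \ref{thm:dmfenkf} — and the forecast contraction \eqref{eq:erfwd}, by $\lesssim d(\ml_{t_{j-1}},\rho_{t_{j-1}})$. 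The second (freshly incurred bias) is bounded, by inserting $G\hat{\rho}_j$ and using coincidence-on-Gaussians together with the two stabilities, by $\lesssim d_\infty(\hat{\rho}_j,G\hat{\rho}_j)$, the non-Gaussianity of the forecast.

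It then remains to show this forecast non-Gaussianity is $\cO(\lambda h)$, which I would obtain by a parallel induction on $b_j:=d_\infty(\rho_{t_j},G\rho_{t_j})$. Decomposing $\rho_{t_{j-1}}=G\rho_{t_{j-1}}+r$ with $\|r\|=b_{j-1}$ and using linearity of the Markov operator $\cK_h^\top$, the forecast splits as $\cK_h^\top(G\rho_{t_{j-1}})+\cK_h^\top r$. The second piece has norm $\le b_{j-1}$ by \eqref{eq:erfwd} and contributes $\lesssim b_{j-1}$ to the deviation from Gaussianity; the first is the forecast of a Gaussian, which is \emph{exactly} Gaussian when $F$ is linear ($\lambda=0$) and otherwise lies within $\cO(h)$ of a Gaussian by the smoothness/decay estimate \eqref{eq:smooth_pert} (its mean and covariance move by $\cO(h)$). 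Hence $d_\infty(\hat{\rho}_j,G\hat{\rho}_j)\lesssim b_{j-1}+\cO(\lambda h)$. Applying $\cC_{t_j}$, using its Lipschitz stability and the fact that it sends the nearby Gaussian $G\hat{\rho}_j$ to a Gaussian, gives $b_j\lesssim b_{j-1}+\cO(\lambda h)$, so $b_j=\cO(\lambda h)$ for fixed $j$. Feeding this back produces the scalar recursion $A_j\lesssim A_{j-1}+\cO(\lambda h)$ for $A_j:=d_\infty(\ml_{t_j},\rho_{t_j})$, whence $A_j=\cO(\lambda h)$, the constant absorbing the finitely many steps.

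Combining the Gaussian bias with the finite-resolution estimates through \eqref{eq:er1step} then delivers both displayed bounds. The main obstacle I anticipate is the joint, metric-level Lipschitz stability of the two update operators: $\cC_{t_j}$ is a normalized multiplication that is well-behaved only because of the uniform likelihood bounds \eqref{eq:dublin}, whereas $\cC^G_{t_j}$ depends \emph{nonlinearly} on the input density through its covariance, so term (I) must reuse the delicate perturbation bounds on $K$, $\Sigma$, $V$, and $\tilde g$ assembled in the proof of Theorem \ref{thm:dmfenkf}. The other delicate point is quantifying the non-Gaussianity freshly created by a single short forecast of a Gaussian as exactly $\cO(\lambda h)$; this is where the linear-versus-nonlinear dichotomy, and hence the factor $\lambda$, genuinely enters, and where the perturbation assumption \eqref{eq:smooth_pert} together with a Gaussian initial condition $\rho_0$ (ensuring $b_0=0$) are essential.
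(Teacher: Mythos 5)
Your proposal is correct in substance and shares the paper's overall architecture --- the decomposition \eqref{eq:er1step}, an induction on the Gaussian bias $d(\ml_{t_j},\rho_{t_j})$ driven by the fact that $\cC_{t_j}$ and $\cC^G_{t_j}$ coincide on Gaussians, and propagation of $\cO(\lambda h)$ non-Gaussianity via \eqref{eq:smooth_pert} --- but it transposes the crucial one-step triangle inequality, and this has real consequences. The paper's decomposition \eqref{eq:pfdecom} uses the intermediate point $\cC_{t_{j+1}}\cK_h^\top\ml_{t_j}$: the difference of the two \emph{update operators} is evaluated on the MFEnKF forecast (handled by the geometric-series estimate \eqref{eq:big1}--\eqref{eq:cgauss2}, your ``freshly incurred bias'' lemma in different clothing), while the difference of the two \emph{inputs} is pushed through the true Bayesian update $\cC$, whose $d_\infty$-stability \eqref{eq:pft2} is cheap and standard (constant $2\alpha^{-2}$, Lemma 4.9 of \cite{lsz2015}), requiring nothing beyond the likelihood bounds \eqref{eq:dublin}. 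Your decomposition instead uses the intermediate point $\cC^G_{t_j}\hat{\rho}_j$, so the input difference must be pushed through the MFEnKF update $\cC^G$, whose gain depends nonlinearly on the input covariance; as you yourself flag, this forces a rerun of the entire perturbation chain \eqref{eq:bndk}--\eqref{eq:gfin} with $N^{-\kappa/d}$ replaced by $d_\infty(\hat{\ml}_j,\hat{\rho}_j)$. That argument does go through, but note it quietly requires singular-value bounds and decay for the covariance of the \emph{true} forecast $\cK_h^\top\rho_{t_{j-1}}$, which Assumption \ref{ass:dmfenkf}(ii)--(iii) grants only for the MFEnKF/DMFEnKF densities; the paper's routing avoids ever needing gain or covariance regularity for $\rho$. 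In exchange, your route is lighter on the non-Gaussianity bookkeeping --- you run the parallel induction only for the true filter $b_j = d_\infty(\rho_{t_j},G\rho_{t_j})$, whereas the paper must maintain $\ml^{NG}_{t_j}=\cO(h)$ \emph{and} $\rho^{NG}_{t_j}=\cO(h)$ simultaneously in \eqref{eq:oh} --- and you are more explicit than the paper about the base case requiring $\rho_0$ (near-)Gaussian. Both routes close with the same finite-horizon recursion $A_j \lesssim A_{j-1} + \cO(\lambda h)$ and, combined with Theorems \ref{enkftheo} and \ref{thm:dmfenkf} through \eqref{eq:er1step}, yield the two stated bounds; the paper's choice of intermediate point is simply the more economical one.
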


 \begin{proof}
Recall Eqs. \eqref{eq:er1step}.  
For linear $F$  in \eqref{eq:langevin}, of course the Gaussian bias in \eqref{eq:er1step} vanishes, 
and the result is exactly given by Theorems \ref{enkftheo} and \ref{thm:dmfenkf}. %Eqs. \eqref{eq:erdisc}.
Consider the case of $F$ {\it nonlinear}.  
Since the first terms are bounded by Eqs. \eqref{eq:erdisc},
the focus here is on the second Gaussian error term, which is decomposed as follows
\begin{eqnarray}
\nonumber
d \left (\ml_{t_{j+1}},\rho_{t_{j+1}} \right) &=& d\left (\cC^G_{t_{j+1}} \cK^\top_h \ml_{t_j}, \cC_{t_{j+1}} \cK^\top_h \rho_{t_j} \right) \\
&\leq&  
d\left (\cC^G_{t_{j+1}} \cK^\top_h \ml_{t_j}, \cC_{t_{j+1}} \cK^\top_h \ml_{t_j} \right) + d\left (\cC_{t_{j+1}} \cK^\top_h \ml_{t_j}, \cC_{t_{j+1}} \cK^\top_h \rho_{t_j} \right).
\label{eq:pfdecom}
\end{eqnarray}

Let $\pl$ be a probability density and assume that 
\begin{equation}
\left | \int f \pl^{NG}du \right | \leq c h < \alpha^2/6
\label{eq:ass1}
\end{equation}
 almost surely
for all $|f|_\infty \leq1$.  
Then for all $|f|_\infty \leq 1$
\begin{eqnarray}
\label{eq:big1}
\int f(u) (\cC_{j} \pl)(u)du & = & \frac{\int g_{j}f \pl^G } {\int g_{j} \pl^G + \int g_{j}\pl^{NG}}  +  
\frac{\int g_{j}f \pl^{NG}} {\int  g_{j} \pl} \\
\label{eq:big2}
&=& \frac{\int f g_{j} \pl^G} {\int g_{j}\pl^G} \sum_{k=0}^\infty 
\left [ -\frac{1}{\alpha} \frac{\int \alpha g_{j}\pl^{NG}} {\int g_{j}\pl^G} \right]^k + 
\alpha^{-1} \frac{\int \alpha g_{j}f \pl^{NG}} {\int g_{j} \pl} \\
\label{eq:big3}
&\leq& \frac{\int f g_{j} \pl^G} {\int g_{j} \pl^G}  
+ 2ch\alpha^{-2}
+ \alpha^{-1} \left | \frac{\int \alpha g_{j}f \pl^{NG}} {\int g_{j} \pl}\right | \\
&\leq&  \int f(u) \left( \cC_{t_j}^G \pl^G \right )(u) du  + 3ch \alpha^{-2}.
\label{eq:cgauss1}
\end{eqnarray}
To see the step between \eqref{eq:big1} and \eqref{eq:big2} notice first that by assumption 
\eqref{eq:dublin} $|\alpha g_{t_j}|_\infty \leq 1$ and inf$(g_{t_j}) \geq \alpha$.  Also, since $p^G$ is 
a positive probability density, the denominator of the bracketed expression is bounded below by $\alpha^2$, 
and by assumption $\epsilon := {\rm sup}_{|f|_\infty \leq 1} | \int f \pl^{NG}du | \alpha^{-2} \leq c h \alpha^{-2} < 1/6$.
Between \eqref{eq:big2} and \eqref{eq:big3} we have used that $g_{t_j}p^G/\int{g_{t_j}p^G}$
is a probability measure and $|f|_\infty \leq 1$, as well as the fact that $\epsilon/(1-\epsilon)<2\epsilon$
for $0<\epsilon<1/2$.
By a similar argument,
\begin{eqnarray}
- \int f(u) (\cC_{t_j}^G \pl)(u) du &\leq& -\int f(u) \left( \cC_{t_j}^G \pl^G \right )(u) du +  3ch \alpha^{-2}.  
\label{eq:cgauss2a}
\end{eqnarray}
Evaluating the same two inequalities with the signs swapped then gives  
\begin{eqnarray}
\left | \int f(u) (\cC_{t_j} \pl)(u) du - \int f(u) (\cC_{t_j}^G \pl)(u) du \right | &\leq& 
6 c h \alpha^{-2}.
\label{eq:cgauss2}
\end{eqnarray}

Given assumption \eqref{eq:smooth_pert}, and 
under the a priori assumption that $\ml^{NG}_{t_j} = \cO(h)$ and $\rho^{NG}_{t_j}=\cO(h)$, we have 
\begin{eqnarray} 
( \cK^\top_h \rho_{t_j} )^{NG} =  \cO(h), \quad \quad ( \cK^\top_h \ml_{t_j} )^{NG} =  \cO(h).
\label{eq:oh}
\end{eqnarray}
Therefore, 
\begin{eqnarray} 
d_\infty \left (\cC^G_{t_j} \cK^\top_h \ml_{t_j}, \cC_{t_j} \cK^\top_h \ml_{t_j} \right) & \leq &  6 c h \alpha^{-2},
\label{eq:pft1}
\end{eqnarray}
where $d_\infty$ denotes the metric \eqref{eq:metric} except with the supremum taken over 
$\{|f|_\infty\leq 1\}$.  The same reasoning shows that  ${\rho}^{NG}_{t_{j+1}} =  \cO(h)$ and 
${\ml}^{NG}_{t_{j+1}} =  \cO(h)$, using the second assumption of \eqref{eq:dublin} for the latter,
hence justifying the a priori assumption above \eqref{eq:oh} by induction.  
The first term of \eqref{eq:pfdecom} is bounded by \eqref{eq:pft1} since 
$\{|f|_\infty \leq 1\} \supset \{|f|_L \leq 1\} \Rightarrow d(\cdot,\cdot) \leq d_\infty(\cdot,\cdot)$.

For the second term of  \eqref{eq:pfdecom} it is well-known 
(see \cite{rebeschini2013can}, and also Lemma 4.9 of \cite{lsz2015})
that
\begin{eqnarray} 
d\left (\cC_{t_j} \cK^\top_h \ml_{t_j}, \cC_{t_j} \cK^\top_h \rho_{t_j} \right) & \leq &  2\alpha^{-2} d({\ml}_{j},{\rho}_{j}).
\label{eq:pft2}
\end{eqnarray}
Inserting Eqs. \eqref{eq:pft1} and \eqref{eq:pft2} into \eqref{eq:pfdecom} yields
$$
d \left (\ml_{t_{j+1}},\rho_{t_{j+1}} \right) \leq 2\alpha^{-2} d({\ml}_{j},{\rho}_{j}) + 6 c(j) h \alpha^{-2} ,
$$
Noting that $\ml_0 = \rho_0$, 
and combining the above with Eqs. \eqref{eq:erdisc} and \eqref{eq:er1step} gives, 
for some $C(j)>0$
\begin{eqnarray}
\nonumber
d\left (\hat{\ml}^N_{t_j},\rho_{t_j} \right ) &\leq& C(j) (h + N^{-1/2}), \\
d\left (\ml^N_{t_j},\rho_{t_j} \right) &\leq& C(j) (h + N^{-\kappa/d}). 
\label{eq:pfdone}
\end{eqnarray}

\end{proof}

Note that the bound given above grows exponentially with the number of observation times.
Under the assumption of a Dobrushin ergodic coefficient \cite{dobrushin1956central}, i.e. the dynamics of $\cK_h$
"mix sufficiently well", it may be possible to obtain a uniform bound on the constant, as has been 
done for standard particle filters in \cite{del2001stability, rebeschini2013can}.
\kl{Also note the assumption \eqref{eq:ass1} 
may require small $h$ 
because the required constant may grow with each step.  
This would limit the results to small total time $t_j = j h$, 
which would not be very valuable.  There are two things to note here.  The first is that the error 
decays after a certain number of steps due merely to decorrelation, assuming some sufficient regularity of 
the forward kernel, merely due to ergodicity.  See \cite{mattingly2002ergodicity} for some general conditions for ergodicity of models
given by SDE (even with degenerate noise) and \cite{del2001stability, rebeschini2013can} 
for the derivation of time-uniform estimates for particle filters under such ergodicity assumption. 
The infinite-time horizon is not considered here.
The second is that the error which is retained over multiple time-steps is related to 
{\it non-Gaussianity} \eqref{eq:oh}. 
This may be small for two reasons, in addition to shortness of assimilation window $h$:
(i) tightness of the density due to accurate observations, 
following from Laplace approximation arguments \cite{laplace1986memoir},
and (ii) approximate linearity of the forward model. 
These aspects do not appear in the continuity assumption of \eqref{eq:smooth_pert},
and indeed they are hidden in the constants of \eqref{eq:ass1} and \eqref{eq:oh}.
In fact, on a single assimilation window, a shorter time interval $h$ allows a more accurate linearization in time,
and a more accurate observation tightens the distribution along a given subspace and allows a 
more accurate linearization in space.  
It is of interest to rigorously incorporate these subtleties into the theory
and to extend the current results to long time-horizons.  This is the subject of ongoing work.
Nonetheless, it will be shown numerically in the following section %in the numerical results 
that the results indeed extend to long times.
Note if there is a decoupled and strongly nonlinear 
direction of the dynamics that is left completely unobserved
then the small bias result is expected to 
hold only for small time.}  
%Indeed this would result in any filter
%being unstable, including the filtering distribution itself (in an appropriate sense).}

It is also worth mentioning that, although the measures are all random in the observation sequence
$Y_{t_j}$, this randomness does not appear explicitly in the proofs (it would in principle appear in \eqref{eq:pft1}, 
but its presence is innocuous and does not spoil the estimates).  
The only randomness which appears in the proofs
is the randomness of the ensemble in the EnKF distribution $\hat{\ml}^N_{t_j}$.  
The results therefore hold {uniformly} 
with respect to the observation sequence, and
one may consider the expectation only with respect to the ensemble.

\section{Numerical examples}
\label{sec:numerics}

This section will be divided into the following subsections. 
In Section
\ref{ssec:numintro}, the particular numerical set-up is described.
Results for a linear example follow in Section \ref{ssec:linear}, and results for 
a nonlinear example are presented in Section \ref{ssec:nonlinear}.

\subsection{Particular set-up}
\label{ssec:numintro}

Referring back to Sec. \ref{sec:fpf}, the following special case of Eq. \eqref{eq:langevin}
will be considered.  Let $d=1$ and let $$V(u)= - \int_{-\infty}^u F(s) ds$$ and require
$V \rightarrow \infty$ as $u\rightarrow \pm \infty$.  
In the absence of observations, we know that 
$$u(t) \rightarrow \frac{\exp(-V/b)}{\int \exp(-V/b)}$$ in distribution 
for any initial condition $u$.
In particular, we will consider the linear case $F(u)=-au$, corresponding to a single-well potential, in 
Section \ref{ssec:linear}, and the nonlinear case 
$F(u) = a u(1-u^2)/(1+u^2)$, {corresponding to a double-well potential}, in Section \ref{ssec:nonlinear}, for $a>0$.

This model is ubiquitous in the sciences, notably in molecular dynamics where it represents the motion of a particle with negligible mass trapped in an energy potential $V$ with thermal fluctuations represented by the Brownian motion \cite{Ris84}.

\kl{A second-order finite difference discretization is employed with exact semigroup integration in time, and 
trapezoidal quadrature rule.  
Further details and discussion of the particulars of the numerical discretization and error metric 
used in this section are \kl{included} in the supplementary materials.
The outer framework of filter convergence is the objective of the present work.
The point is that this holds for {\it any convergent numerical discretization}
at the inner level, and the order of approximation error is preserved.}

\todo{All examples here have $d=1$ and $\kappa=2$.  In this regime, Thoerem \ref{thetheo} indicates
that DMFEnKF will converge faster than EnKF, up to the $\cO(h)$ Gaussian bias.  The numerical results
will verify this.}

\subsection{Linear example}
\label{ssec:linear}

Here the 
Ornstein-Uhlenbeck process is considered in which $$F(u)=-au$$
in Eq. \eqref{eq:langevin}.
For observation increment $h$, this fits into the framework of
Equation \eqref{eq:dtf1}, and we have 
\begin{equation}
\Psi(u_{t_j}) = e^{-ah}u_{t_j},
\label{eq:ou}
\end{equation}
and $\Sigma = (b/a) (1 - e^{-2ah})$. For this example,
we let $H=\Gamma=a=b=h=1$.  
The potential $V$ is quadratic in this case
so the unconditioned process has uni-modal (Gaussian) invariant distribution.
In particular, in the absence of observations
$u(t) \rightarrow N(0,1)$ for any initial condition $u_0$.

See Fig. 5.1 for the error of the
deterministic MFEnKF-G2 Fokker-Planck filter (same in this case as G1) 
and the EnKF with respect to the mean and covariance of the true 
Kalman filter solution over several different values of $N$.
Note the error of EnKF is $\cO(1/\sqrt{N})$ 
while the DMFEnKF has error $\cO(N^{-\kappa})$ as proven in 
Sec. \ref{ssec:theo}. 
In fact, the error of the latter 
reaches numerical precision for 
$N=200$ \kl{degrees of freedom}, which is unreasonably good.  
This may be attributed to the fact that the numerical method preserves
the invariant distribution of the hidden process, 
and the observation increment is of the same order
as the relaxation time of the hidden process.
Notice EnKF with $N=4\times10^5$ 
still performs worse than MFEnKF 
with $N=40$.

\begin{figure}[!t]
\label{f2} 
\includegraphics[width=.45\columnwidth]{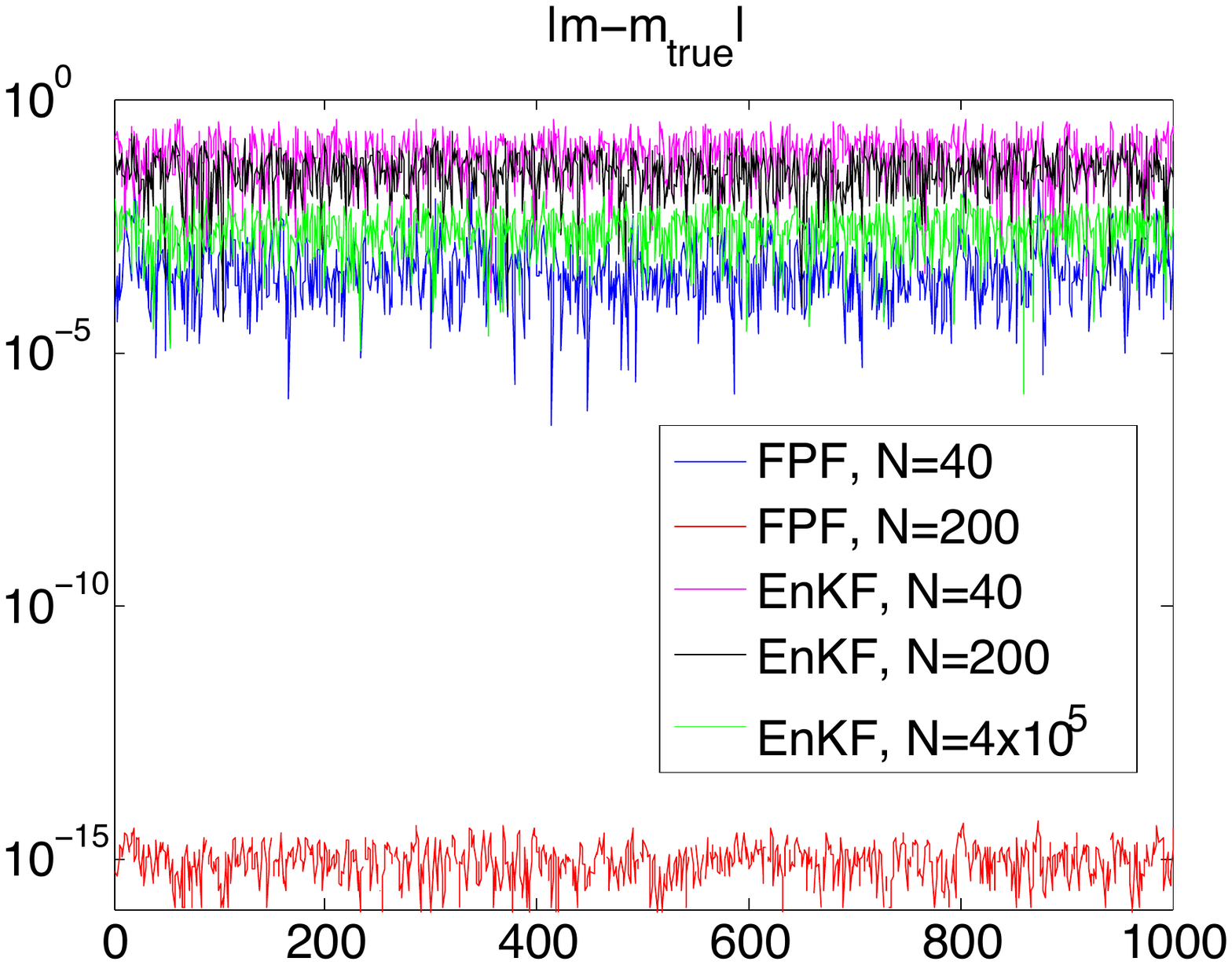}
\includegraphics[width=.45\columnwidth]{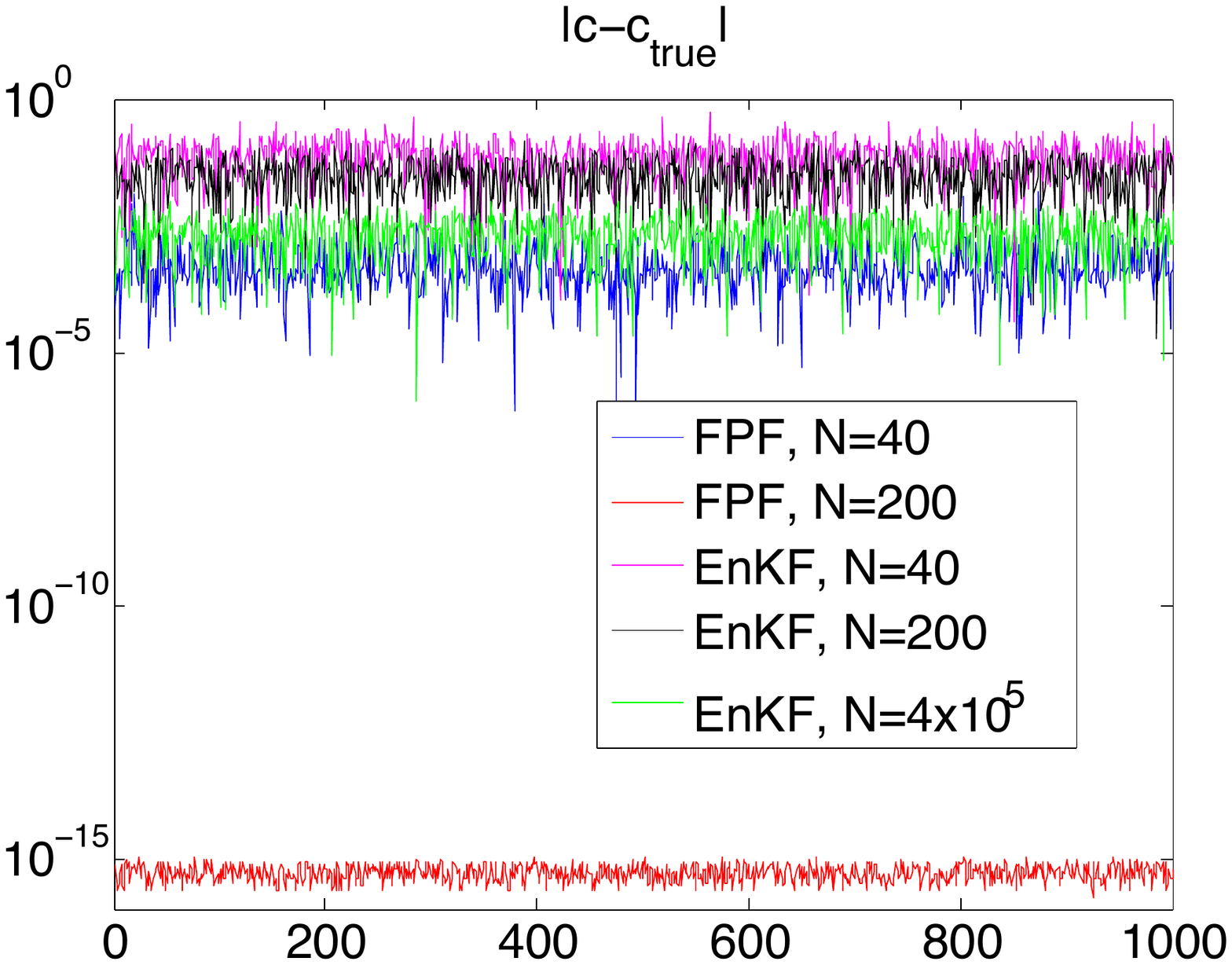}
\caption{OU process example.
The relative error in the mean (left), and covariance
  (right) with respect to the true (Gaussian) filtering distribution
for the Fokker-Planck filter MFEnKF-G2 (FPF) 
and the EnKF for several different cost levels $N$.  
Note the difference for EnKF is small owing to the 
$O(1/\sqrt{N})$ error, 
while the MFEnKF-G2 
reaches numerical precision for 
$N=200$. EnKF for $N=4\times10^5$ still performs
worse than MFEnKF-G2 with $N=40$.}
\vspace{3mm}
\end{figure}

\subsection{Nonlinear example}
\label{ssec:nonlinear}

\begin{figure}[!t]
\label{flang} 
\includegraphics[width=.45\columnwidth]{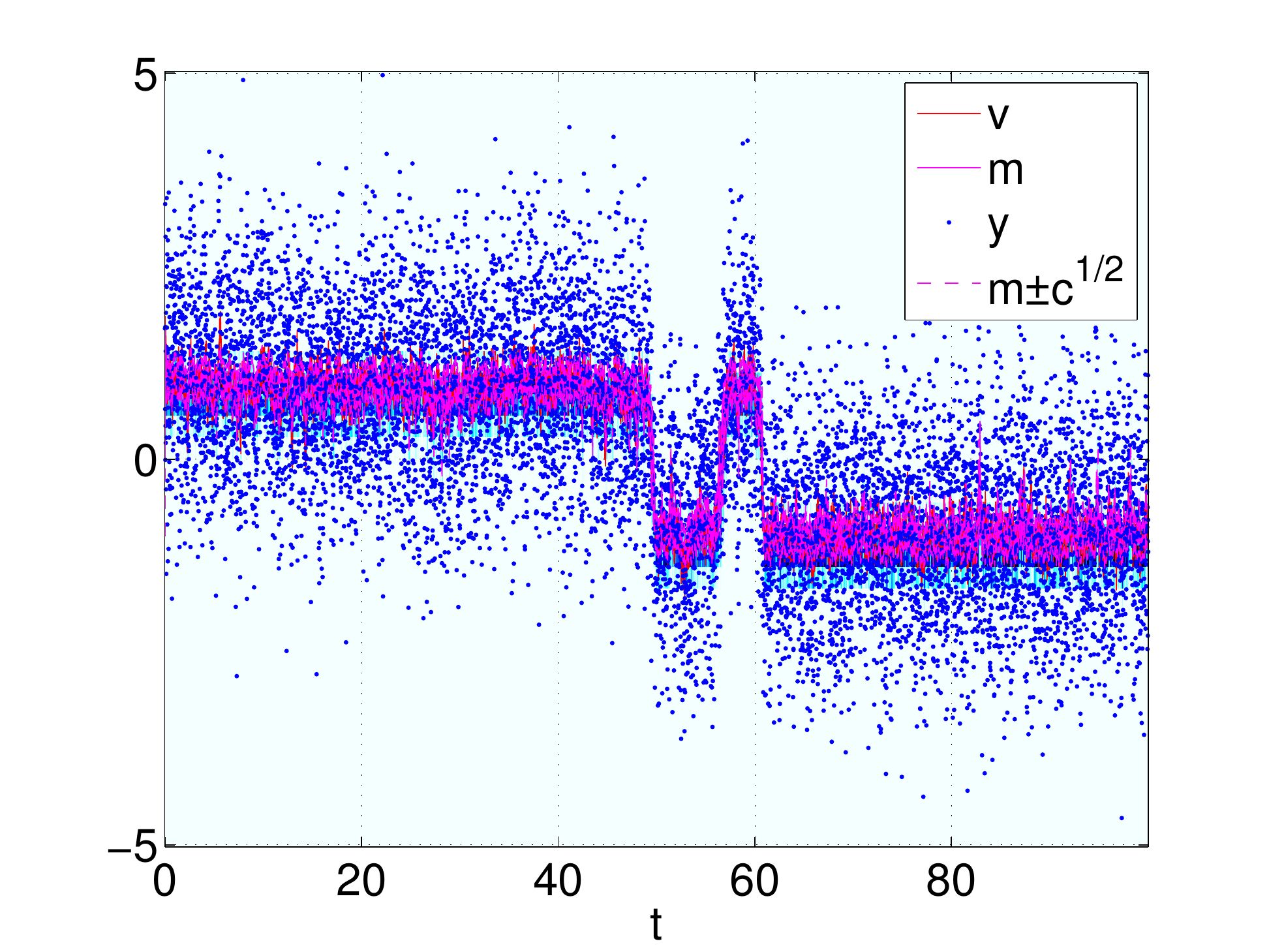}
\includegraphics[width=.45\columnwidth]{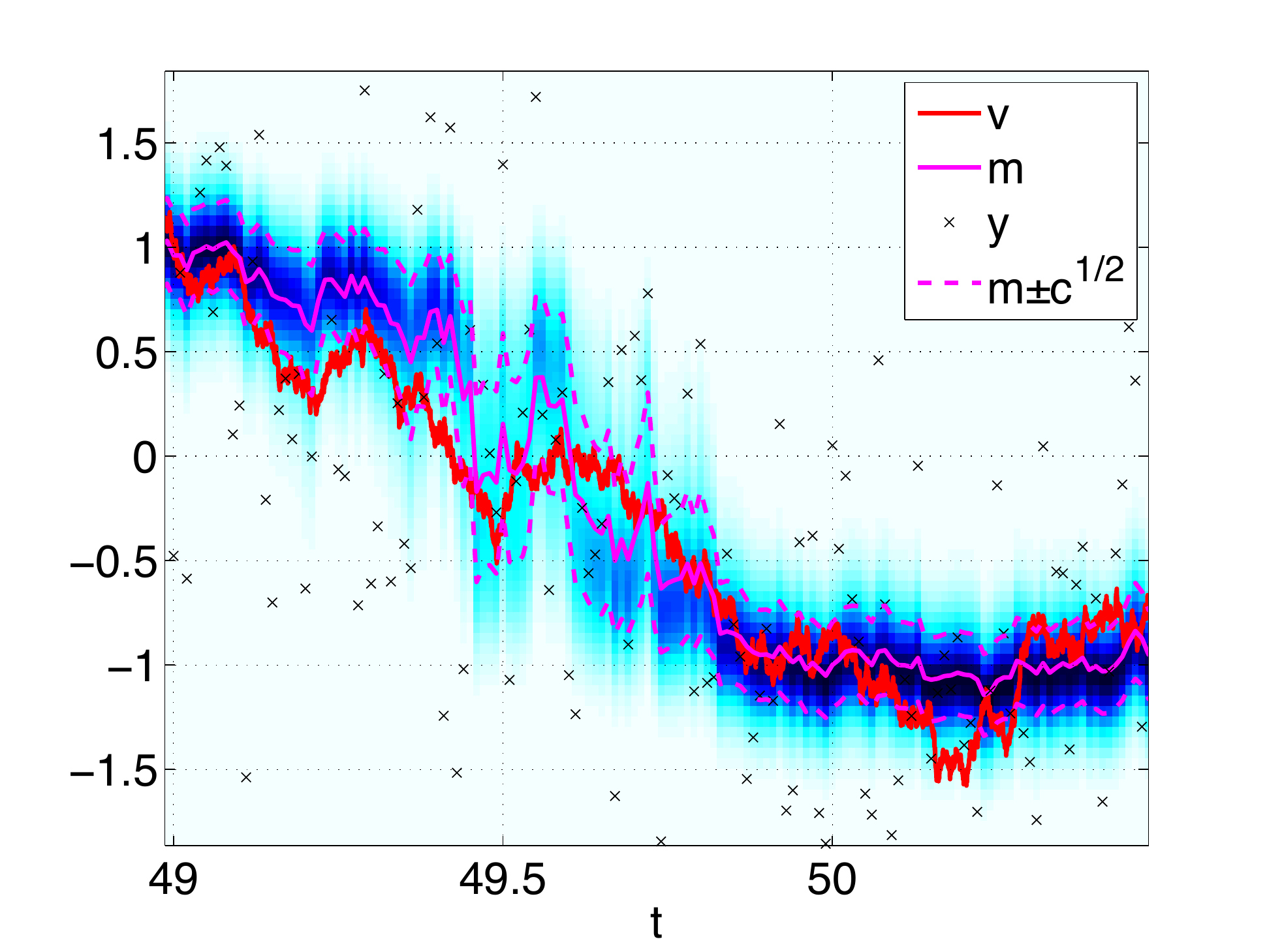}
\caption{The evolution (left) and close-up (right) of the filtering distribution
  for the continuous time Langevin process with a double-well external potential
  with discrete observations ($h=10^{-2}, dt=10^{-4}, \gamma=1$).  
  \kl{The blue background shows the probability density, % intensity, 
  with darker corresponding to higher density. %intensity.
  The symbols $v$, $m$, $y$, and $c$ correspond to the signal (or "truth"), the mean, 
  the observation, and the variance.}}
\vspace{3mm}
\end{figure}

The Langevin diffusion process is considered here, in which $F(u) = a u(1-u^2)/(1+u^2)$
in Eq. \eqref{eq:langevin} with $a=10$.  The dynamical noise level is $b=1/2$ and the 
observational noise variance is $\gamma^2=1$.
The invariant distribution of the unconditioned state is again 
known, as described in the introduction to the section.  
Since $V$ has a double-well shape in this case, the invariant distribution is bimodal 
and paths of Equation \eqref{eq:langevin} transition from one well to the other with a certain temporal probability. 
There is no analytical solution for the transient
state, as there was in Section \ref{ssec:linear}, and hence 
the transition kernel cannot be expressed in closed
form.

Upon Euler-Marayuma discretization of time-step $dt$, we have
\begin{equation}
u_{k+1} = u_k + dt f(u_k) + \sqrt{2 b dt} \Delta_k,
\label{euler_marauyma}
\end{equation}
where $\Delta_k \sim N(0,1)$ are i.i.d.  The discrete process $\{u_k\}$ does 
not have the same invariant distribution as $u(t)$ unless this is enforced
with an accept-reject step \cite{bou2010pathwise}.
Figure \ref{flang} shows the evolution of the filtering density in the background, 
with the \kl{truth ($v$), mean ($m$), standard deviation intervals ($m\pm c^{1/2}$), 
and observations ($y$) for a long time interval on the left, and a close-up interval on the right.}
In the case that the observation increment $h=ndt$ for $n > 1$, we solve
\eqref{euler_marauyma} for $n$ steps, and this approximates a draw from
the kernel appearing in \eqref{eq:dtf2}. 
Notice that this is a nonlinear {\it and non-Gaussian} state-space model, 
and we are now in the generalized framework.

A systematic series of numerical experiments is now performed with $dt=10^{-4}$
and $h=ndt$ where $n=5,20,100,$ and $1000$.  The solution 
from algorithm full FPF with $N=1000$ is taken as the benchmark against which to evaluate 
the other methods, and \kl{we look at the relative RMSE over the time window $t\in[0,1000]$ 
with respect to the truth, and with respect to the mean and variance. 
The time window averaged over includes hundreds
of transitions between wells, and so the results sufficiently incorporate transition behavior.}
The error of the full FPF with $N=200$ may be taken as the lower-bound error
level of the benchmark.

The results are graphically presented in Figs. \kl{6.3, 6.4, and 6.5. 
The term "truth" in what follows refers to the signal realization $u$ which gave rise to the data, i.e. the {\it realization} 
$u$ which, together with the realization $\eta$, gave rise to the observed sequence $y$ with $y_j = H u_j + \eta_j$.}
The following points summarize the plots:
\begin{itemize}
\item In the case of $n=5$ time-steps per observation, MFEnKF fails with $N=200$ degrees of freedom 
and requires $N=1000$ for convergence  due to error in computation of the convolution 
\footnote{In fact the convolution in MFEnKF is 
computed using FFT for convenience, resulting in a Riemann sum approximation 
of the integrals and leading to $\kappa=1$ rather than $\kappa=2$ here.}.  
This is presumably because of the reduced relative resolution for the more narrow distribution in this case.  
For the other Fokker-Planck based algorithms the distribution is imperceptibly close for $N=200$ and $N=1000$ -- this is a common criterion for determining convergence of numerical discretization.
\item Only in the strongly nonlinear and non-Gaussian case, for $n=1000$, is there a notable difference between the methods 
in relative RMSE with respect to the truth.  This is due presumably to the fact that in the other cases the RMSE between the truth and the mean of the filtering distribution is larger than the error between the mean and any of the approximate estimators.  In this case,
we observe the following from the close-up panel:
\begin{itemize}
\item The mean of the full FPF gives the minimum RMSE.  
\item The second place goes to MFEnKF-G1, which imposes Gaussianity only after the update.  
It is notable that in this strongly nonlinear and non-Gaussian case 
the RMSE is actually {\it smaller} if one performs the full nonlinear update and then imposes Gaussianity, rather than performing the linear  
update but retaining non-Gaussianity as in the MFEnKF.  
\item The RMSE of MFEnKF-G2, which imposes Gaussianity of the forecast, 
is by far the greatest.  The update to the mean and covariance here is the same as with the MFEnKF, but the resulting distribution does not retain any non-Gaussianity.  It is therefore natural to expect worse performance.
\item The RMSE of EnKF is approximately equal to MFEnKF.  One may therefore conclude that the linear error term is dominating.  The RMSE of EnKF with a 200 member ensemble is close to but slightly greater than that with a 1000 member ensemble, and the latter is closer to MFEnKF as expected.  
\end{itemize}
\item For smaller numbers of time steps between observations, and hence a closer to linear and Gaussian kernel, the improvement of the MFEnK filters over the traditional EnKF with respect to mean and variance increases until for $n=5$ the RMSE of EnKF is almost an order of magnitude greater than that of the MFEnK filters.
\item The statistics of the various MFEnK filters also converge to each other as $n$ decreases., indicating convergence to Gaussianity.
\item The statistics of the Full FPF with 200 degrees of freedom indicates an accuracy level of the benchmark results, and this level also coincides with the MFEnK filters for $n=5$. 
\item The RMSE with respect to both mean and variance of MFEnKF-G1 is noticeably smaller than the other estimators for $n=1000$.
\item For $n=5$ the RMSE of EnKF with $N=200$ is approximately twice that of EnKF with $N=1000$.  This difference decreases as non-Gaussian error begins to play a role, until for $n=1000$ there is very little difference. 
\item \kl{With respect to tracking well-transitions in the strongly non-Gaussian regime ($h=1000dt=0.1$), 
it is worth emphasizing the observation that it is preferable to 
preserve the non-linearity of the update and {\it impose} Gaussianity, than to preserve some amount of non-Gaussianity 
and perform a linear update as in MFEnKF (see the $\square$ symbols associated to MFEnKF-G1).  Of course
imposing Gaussianity before the update results in a linear update anyway and performs the worst 
(see the $\times$ symbols associated to MFEnKF-G2).  
This can be related to the idea of implicit filtering \cite{chorin2010implicit, vanden2012data} 
and fixed-lag smoothing, and can build upon the observations related to tracking transitions from
\cite{mandel2009ensemble}.  In the latter work, it was observed that a standard particle filter may fail 
to track due to lack of spread.  This property is not shared by the deterministic approximation.
In this regime the ranking of the methods in terms of signal tracking and 
in terms of approximation of the filtering distribution exactly coincide.}
\end{itemize}

\begin{figure}[!t]
\label{f5} 
\includegraphics[width=1\columnwidth]{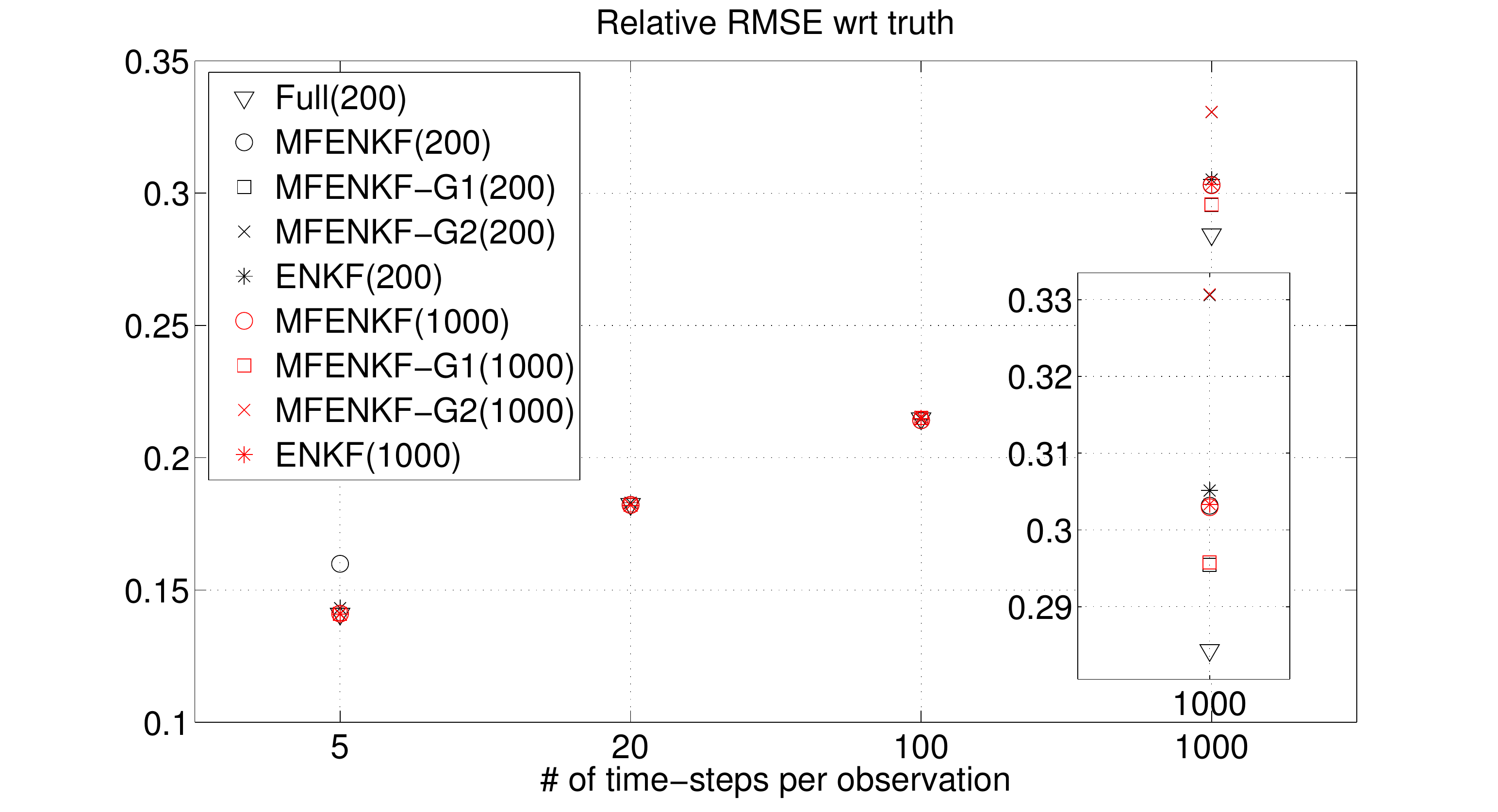} 
\caption{Relative RMSE with respect to the truth of the various filters over
various number of time-steps between observation updates 
with $dt=10^{-4}$.}
\vspace{3mm}
\end{figure}

\begin{figure}[!t]
\label{f6} 
\includegraphics[width=1\columnwidth]{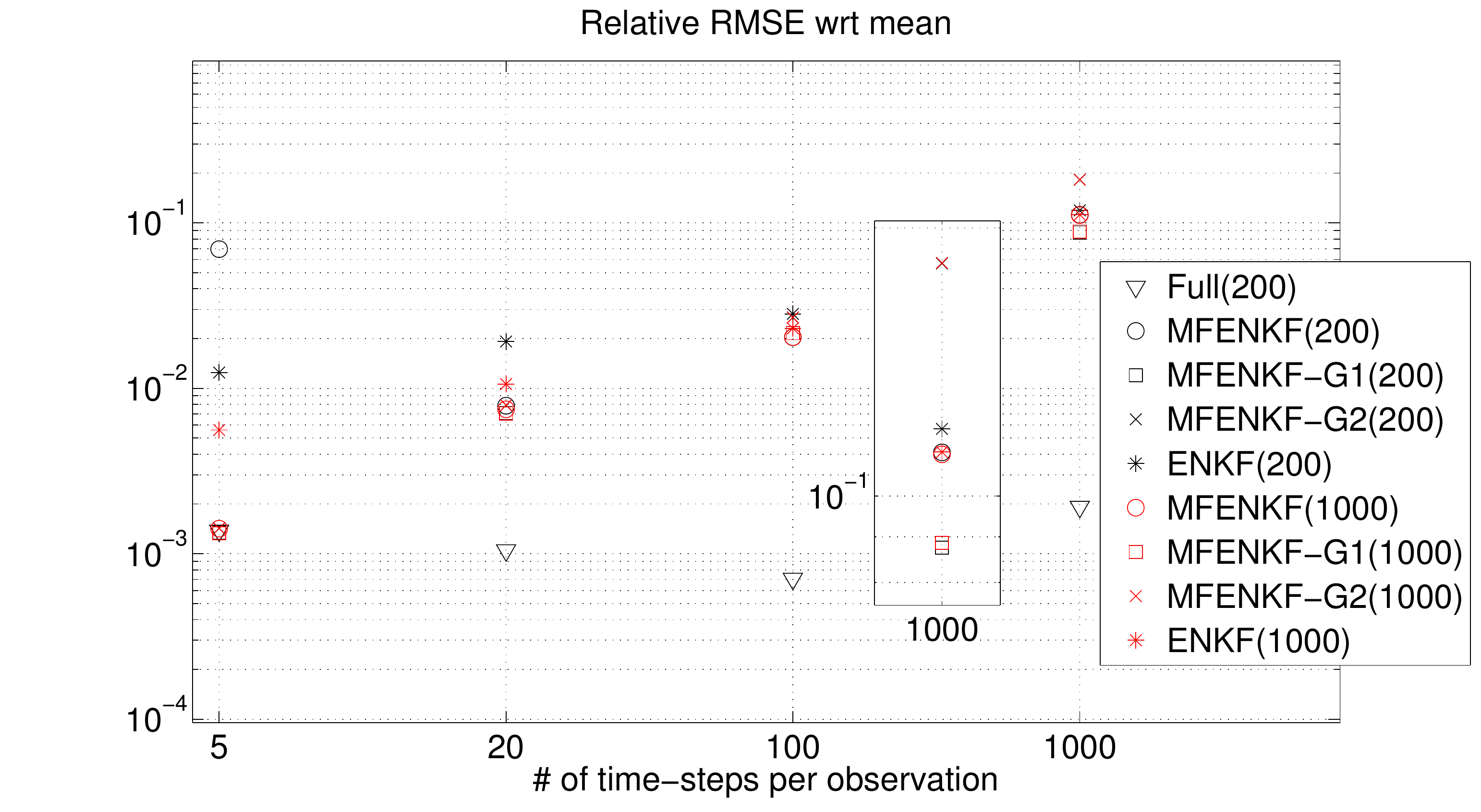} 
\caption{Relative RMSE with respect to the mean of the various filters over
various number of time-steps between observation updates 
with $dt=10^{-4}$.}
\vspace{3mm}
\end{figure}

\begin{figure}[!t]
\label{f7} 
\includegraphics[width=1\columnwidth]{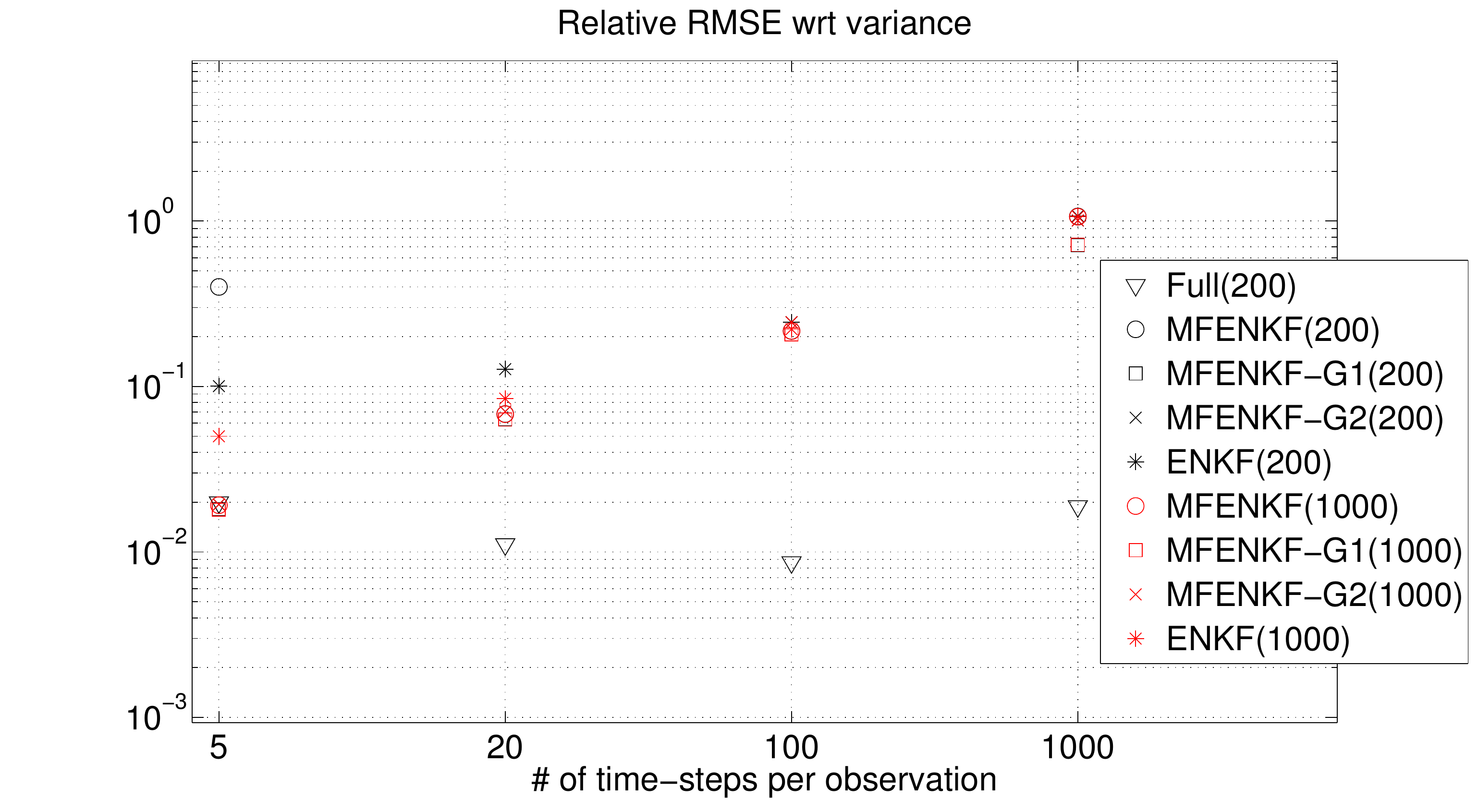} 
\caption{Relative RMSE with respect to the variance of the various filters over
various number of time-steps between observation updates 
with $dt=10^{-4}$.}
\vspace{3mm}
\end{figure}

\section{Conclusion}
\label{sec:conc}

A deterministic approach to 
mean-field ensemble Kalman filtering is considered for 
discrete-time observations of
continuous stochastic processes.  
This approach is based on deterministic approximation of the 
density by numerical solution of the Fokker-Planck equation, and 
intermittent updates to the density based on quadrature rules.
The scheme has a better rate of convergence in terms of degrees of freedom
than corresponding Monte Carlo based methods for $d < 2\kappa$, where
$\kappa$ is the minimal order of the PDE solve and quadrature rule for $d=1$.  
In particular, the proposed DMFEnKF scheme converges faster to its
linearly-biased limiting distribution than traditional EnKF methods, 
and the corresponding full filtering scheme converges faster to the true 
distribution than particle filtering methods.
These results can be used to develop more effective filters.
The use of more sophisticated discretization schemes should enable
the extension of these methods to a much  
higher dimension $d$.
Furthermore, it is well-known that very high-dimensional models may exhibit 
nonlinearity/instability/non-Gaussianity only on low-dimensional manifolds, so it is 
conceivable that deterministic solution techniques such as those developed here 
can be used on such manifold and combined with less expensive 
approximations on the complement.  
Hence it may be possible to extend such methods 
to real-world applications in the foreseeable future.
%
%A sequel to this work is in development, in which the deterministic 
%full Fokker-Planck filter is investigated as an alternative to particle 
%filtering methods.

%\newline
\noindent{\bf Acknowledgements}
{The authors thank Jan Mandel for invaluable feedback in the initial revision, as well as the two referees 
%(one of whom was Jan Mandel) 
whose careful and thorough review and suggestions have enormously improved the manuscript.}
Research reported in this publication was supported by the King Abdullah University of Science and Technology (KAUST).
R. Tempone is a member of the KAUST SRI Center for Uncertainty Quantification. 
K.J.H.Law is a member of the Computer Science and Mathematics Division at Oak Ridge National Laboratory and 
was supported in part by KAUST SRI-UQ and in part by an ORNL LDRD Strategic Hire grant.

%% BIBLIOGRAPHY
\bibliographystyle{siam}
\bibliography{mybib}

\end{document}